\newtheorem{theorem}{Theorem}[section]
\newtheorem{lemma}[theorem]{Lemma}
\newtheorem{proposition}[theorem]{Proposition}
\newtheorem{corollary}[theorem]{Corollary}
\newtheorem{assumption}[theorem]{Assumption}
\newtheorem{remark}[theorem]{Remark}
\newtheorem{definition}[theorem]{Definition}
\numberwithin{equation}{section}
\newcommand{\be}{\begin{equation}}
\newcommand{\ee}{\end{equation}}
\newcommand{\bes}{\begin{equation*}}
\newcommand{\ees}{\end{equation*}}
\def\E{\bE}
\def\cG{\mathcal{G}}
\def\bE{\mathbb{E}}
\newcommand{\R}{\mathbb{R}}
\renewcommand{\d}{{\rm d}}
\renewcommand{\geq}{\geqslant}
\renewcommand{\leq}{\leqslant}
\renewcommand{\ge}{\geqslant}
\renewcommand{\le}{\leqslant}
\newcommand{\rd}{{\mathbb R^d}}
\def\m1{\mathbf{1}}
\title{Critical parameters for reaction-diffusion equations involving  space-time fractional  derivatives.}
\author{Sunday A. Asogwa\\ Auburn University
\and Mohammud Foondun\\University of Strathclyde
\and Jebessa B. Mijena\\Georgia College \& State University
\and Erkan Nane\\ Auburn University
 }
\date{}
\begin{document}
\maketitle

\begin{abstract}
We will look at reaction-diffusion type equations of the following type,
$$\partial^\beta_tV(t,x)=-(-\Delta)^{\alpha/2} V(t,x)+I^{1-\beta}_t[V(t,x)^{1+\eta}].$$
We first study the equation on the whole space by making sense of it via an integral equation. Roughly speaking, we will show that when $0<\eta\leq\eta_c$, there is no global solution other than the trivial one while for $\eta>\eta_c$, non-trivial global solutions do exist. The critical parameter $\eta_c$ is shown to be $\frac{1}{\eta^\ast}$ where
\begin{equation*}
\eta^\ast:=\sup_{a>0}\left\{\sup_{t\in (0,\,\infty),x\in \R^d}t^a\int_{\R^d}G(t,\,x-y)V_0(y)\,\d y<\infty\right\}
\end{equation*}
and $G(t,\,x)$ is the heat kernel of the corresponding unforced operator.  $V_0$ is a non-negative initial function.  We also study the equation on a bounded domain with Dirichlet boundary condition and show that the presence of the time derivative induces a significant change in the behaviour of the solution.

\end{abstract}
 {\bf Keywords:} space-time fractional  partial differential equations, Fujita-type blow-up conditions.

\section{Introduction and main results}
A very influential paper by Fujita \cite{Fujita} looks at the following equation
\begin{align}\label{usual}
 \partial_tu(t, x)&=
 \Delta u(t, x)+u(t,\,x)^{1+\eta}\quad x\in \R^d\\
 u(0, x)&=u_0(x).\nonumber
\end{align}

Let $\eta_c=\frac{2}{d}$.  It was shown in \cite{Fujita} that when $0<\eta<\eta_c$, there is no nontrivial global solution no matter how  small the initial condition $u_0$ is, provided it is nonnegative. When $\eta>\eta_c$, then one can construct nontrivial global solution when $u_0$ is small enough.  The critical case $\eta=\eta_c$ was shown to fall into the first category; see \cite{Haya} and \cite{KST}. These results have inspired a lot of generalisations. See the survey papers \cite{Levine} and \cite{DL} and the book \cite{Souplet}. Eq\eqref{usual} could be interpreted via the integral equation
\begin{equation}\label{fujita}
u(t,\,x)=\int_{\R^d}p(t,\,x-y)u_0(y)\,\d y+\int_0^t\int_{\R^d}p(t-s,\,x-y)u(s,\,y)^{1+\eta}\,\d y\,\d s,
\end{equation}
where $p(t,\,x)$ is the Gaussian heat kernel.  This is the approach we adopt here.

Roughly speaking, our aim here is to look at similar questions but for a class of equations which involve the fractional Laplacian as well as a fractional time derivative. Equations of these types have been receiving a lot of attention lately; see the recent works of Allen, Caffarelli and Vasseur; \cite{ACV} and \cite{ACV2} and of Allen; \cite{Allen1} and \cite{Allen2} among others on the purely analytic side and the very recent work of Capitanelli and D'Ovidio \cite{CD1} and references therein for the more probabilistic aspects.  Consider the following generalisation of \eqref{fujita},
\begin{equation}\label{mild}
V(t, x)=
\int_{\R^d} G(t,\,x-y)V_0(y)\,\d y + \int_{\R^d}\int_0^t G(t-s,\,x-y)V(s, y)^{1+\eta}\d s\,\d y.
\end{equation}
The first term in the above display now solves the  space-time fractional  heat equation
\begin{align}\label{H1}
 \partial^\beta_tV(t, x)&=
 -(-\Delta)^{\alpha/2} V(t, x)\ \ x\in \mathbb{R}^d\\
 V(0, x)&=V_0(x), \nonumber
\end{align}
where $\alpha\in (0,\,2)$ and $\beta\in (0,\,1)$. The fractional time derivative is the Caputo derivative defined by
\begin{equation*}\label{CaputoDef}
\partial^\beta_t V(t,x)=\frac{1}{\Gamma(1-\beta)}\int_0^t \frac{\partial
V(r,x)}{\partial r}\frac{\d r}{(t-r)^\beta}.
\end{equation*}
 The solution to \eqref{mild} is referred to as the integral solution to the following equation
\begin{equation}\label{eq:additional source}
{
\begin{split}
 \partial^\beta_tV(t, x)&=
 -(-\Delta)^{\alpha/2} V(t, x)+I^{1-\beta}_t[ V(t, x)^{1+\eta}]\\
 V(0, x)&=V_0(x).
 \end{split}
 }
 \end{equation}
The operator $-(-\Delta)^{\alpha/2} $ denotes the fractional Laplacian which is the generator of an $\alpha$-stable process. $V_0$ will always be assumed to be a non-negative function.  We will impose further assumptions on $V_0$ later.  The operator $I^{1-\beta}_t$ is defined by
\begin{equation*}
I^{1-\beta}_{t}f(t):=\frac{1}{\Gamma(1-\beta)}\int_{0}^{t}(t-\tau)^{-\beta}f(\tau)\d\tau,
\end{equation*}
and will not play any role in this paper. Its presence is however important in making the connection between \eqref{eq:additional source} and \eqref{mild}.  See \cite{umarov-12} for the fractional Duhamel's principle.
A  version of  equation  \eqref{eq:additional source} is called time-fractional Bloch--Torrey equation for fractional diffusion tensor imaging in \cite{meerschaert-et-al-2016}. See  section 6.2 in their paper for more details. 

 Our main findings can be summarised as follows:

\begin{itemize}
\item We show that $\eta_c=\frac{\alpha}{\beta}$. This is a direct generalisation of the dichotomy first discovered in \cite{Fujita}, \cite{Haya} and \cite{KST}. When $\beta=1$ and $\alpha=2$, \eqref{mild} becomes \eqref{fujita}. Our new found exponent is therefore consistent with that obtained in \cite{Fujita}.
\item We also study \eqref{mild} on a bounded domain with Dirichlet boundary conditions.  For the usual heat equation, that is with the usual time derivative and Laplacian, there is no such dichotomy. This means that one can always produce global solutions no matter what $\eta$ is ; See \cite{Souplet}. In our case, we show that this is not true; for small $\eta$, there is no global solution other than the trivial one.
\end{itemize}

We focus only on {\it integral solution} to \eqref{eq:additional source} as defined say on page 78 in the  book \cite{Souplet} which also contains a list of other concepts of solution. There are also various meanings of non-existence or blow-up of solution, we will be focusing mainly   {on} pointwise non-existence. See \cite{Levine} or \cite{Souplet}, where this is explained in great details. Our method will rely on some new estimates on the heat kernel associated with \eqref{H1} some of which were first proved in \cite{FooNan} and later extended in \cite{KKW}. We will make use of {\it subordination} to get new information about the heat kernel. See \eqref{Eq:Green2} of this current paper.  A difficulty in establishing non-existence on the whole line is that the heat kernel does not satisfy the semigroup property. We had to establish a new strategy to achieve our first result.  Since we had to bypass the semigroup property our method might even be new in the classical heat equation; that is ;  when $\alpha=2$ and $\beta=1$. Our first theorem reads as follows.

\begin{theorem}\label{thm-additional-drift}
Suppose that $0<\eta\leq \alpha/\beta d$ and $V_0\not\equiv 0$, then there is no global solution to \eqref{eq:additional source} in the sense that there exists a $t_0>0$ such that $V(t,\,x)=\infty$ for all $t>t_0$ and $x\in \R^d$.
\end{theorem}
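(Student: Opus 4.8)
The plan is to exploit the integral inequality built into \eqref{mild}: since $V_0 \geq 0$ and the kernel $G$ is non-negative, any integral solution satisfies
\begin{equation*}
V(t,x) \geq \int_{\R^d} G(t,x-y)V_0(y)\,\d y \quad\text{and}\quad V(t,x) \geq \int_0^t\int_{\R^d} G(t-s,x-y)V(s,y)^{1+\eta}\,\d y\,\d s.
\end{equation*}
The first bound gives a lower bound $V(t,x) \geq w(t,x)$ where $w$ is the solution of the unforced equation \eqref{H1}; by the definition of $\eta^\ast$ (with $a$ slightly below the supremum, so $\eta^\ast a > 1$ is arranged by $\eta \leq \alpha/\beta d = 1/\eta^\ast$ — more precisely one uses the known heat-kernel decay $\int G(t,x-y)V_0(y)\,\d y \gtrsim t^{-d\beta/\alpha}$ for fixed $x$ in a bounded set and $t$ large) we get a concrete polynomial-in-$t$ lower bound on $V$. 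The strategy is then to feed this lower bound back into the second integral inequality and iterate: each substitution raises the power of $t$ in the lower bound, and because $\eta \leq 1/\eta^\ast$ sits at or below the critical threshold, the exponents produced by the iteration do not stabilize — they grow without bound (or the constants blow up), forcing $V(t,x) = \infty$ for all large $t$.

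The first step is therefore to record the precise lower bound on $\int_{\R^d} G(t,x-y)V_0(y)\,\d y$. Since $V_0 \not\equiv 0$ and is non-negative, there is a ball $B$ and $\delta > 0$ with $V_0 \geq \delta \ind_B$; combined with the on-diagonal lower bound for $G$ (which, via the subordination formula \eqref{Eq:Green2}, behaves like $t^{-d\beta/\alpha}$ up to the natural spatial correction), this yields $V(t,x) \geq c\, t^{-d\beta/\alpha}$ uniformly for $x$ in any fixed compact set and $t \geq 1$, say. The second step is to set up the iteration. Suppose we have established $V(s,y) \geq A_n (s-1)^{\gamma_n}$ on $\{s \geq 2\} \times K$ for a fixed compact $K$. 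Plugging into the Duhamel term and using only the portion of the $s$-integral near $t$ and the $y$-integral over $K$ (where the kernel is bounded below), we obtain $V(t,x) \geq A_{n+1}(t - c)^{\gamma_{n+1}}$ with $\gamma_{n+1} = \gamma_n(1+\eta) + \beta$ (the $+\beta$ coming from the time integration against the kernel, whose integrable singularity at $s = t$ contributes a factor $\sim (t-s)^{\beta - 1}$). Since $\eta > 0$, the recursion $\gamma_{n+1} = (1+\eta)\gamma_n + \beta$ drives $\gamma_n \to +\infty$; one then checks that the constants $A_n$ do not decay fast enough to counteract this — here is exactly where the condition $\eta \leq \alpha/\beta d$ is used, to guarantee that the initial exponent $\gamma_0 = -d\beta/\alpha$ is large enough (i.e. $(1+\eta)\gamma_0 + \beta \geq \gamma_0$, equivalently $\eta \gamma_0 + \beta \geq 0$, i.e. $\eta \leq \beta/|\gamma_0| = \alpha/\beta d$) that the iteration is self-improving rather than self-defeating. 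Letting $n \to \infty$ gives $V(t,x) = \infty$ for $t$ beyond some $t_0$ and $x \in K$; since $K$ was an arbitrary compact set, and then a further application of the first integral inequality with this infinite lower bound propagates the blow-up to all of $\R^d$.

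The main obstacle is the absence of the semigroup property for $G$, flagged by the authors themselves. In the classical Fujita argument one freely writes $\int G(t-s,x-z)\int G(s,z-y)V_0(y)\,\d y\,\d z = \int G(t,x-y)V_0(y)\,\d y$, which is precisely what makes the iteration clean; without it, each step of the iteration must instead rely on explicit two-sided bounds for $G$ rather than on composition identities. Concretely, the difficulty is to show that $\int_0^t\int_{\R^d} G(t-s,x-y)\,[\text{lower bound}](s,y)\,\d y\,\d s$ is itself bounded below by a function of the same polynomial type — this requires carefully choosing the region of integration (bounded $y$, $s$ in a fixed fraction of $[0,t]$ or near $t$) so that $G(t-s,x-y)$ can be replaced by its on-diagonal lower bound, and then estimating the resulting elementary integral. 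Controlling the accumulated constants $A_n$ through the iteration, so that they do not decay to zero faster than $(t-c)^{\gamma_n}$ grows, is the quantitative heart of the proof and will need the sharp heat-kernel estimates from \cite{FooNan} and \cite{KKW} together with the subordination identity \eqref{Eq:Green2}.
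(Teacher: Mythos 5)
Your overall strategy --- a polynomial-in-$t$ lower bound from $\mathcal{G}V_0$ fed iteratively through the Duhamel term, with the condition $\eta\leq\alpha/(\beta d)$ entering as the self-improvement condition for the exponent recursion --- is indeed the paper's first stage (Lemma \ref{lemma:lower-bound-heq} and Proposition \ref{apriori-1}). But there are two genuine problems. First, your recursion $\gamma_{n+1}=(1+\eta)\gamma_n+\beta$ is wrong: the mild formulation \eqref{mild} contains no factor $(t-s)^{\beta-1}$ (the forcing in \eqref{eq:additional source} is $I^{1-\beta}[V^{1+\eta}]$ precisely so that this factor is absent), and for $s$ near $t$ the correct lower bound comes from restricting the $y$-integral to $|x-y|\leq(t-s)^{\beta/\alpha}$, where $G(t-s,\cdot)$ has mass $\gtrsim 1$; the time integration then contributes $+1$, not $+\beta$. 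With your $+\beta$ the self-consistency condition $\eta\gamma_0+\beta\geq 0$ with $\gamma_0=-\beta d/\alpha$ gives $\eta\leq\alpha/d$, not $\alpha/(\beta d)$ (your evaluation $\beta/|\gamma_0|=\alpha/(\beta d)$ is an arithmetic slip that masks the wrong increment). The correct recursion, for the normalised quantity $F(t)=t^{\beta d/\alpha}\inf_{B(0,t^{\beta/\alpha})}V(t,\cdot)$, is $F(t)\gtrsim 1+\int_0^{t/2}F(s)^{1+\eta}s^{-\eta\beta d/\alpha}\,\d s$, whose exponent gain per step is $1-\eta\beta d/\alpha$; this is where $\eta\leq\alpha/(\beta d)$ genuinely enters. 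Note also that at the critical value $\eta=\alpha/(\beta d)$ the exponents do \emph{not} grow --- the gain is zero --- and the divergence comes instead from $\int_1^{t/2}\d s/s$; your proposal does not cover this case.

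Second, and more seriously, the iteration alone cannot produce a finite $t_0$ with $V(t_0,x)=\infty$: it yields $V(t,x)\geq A_nt^{\gamma_n}$ with $\gamma_n\to\infty$ but with constants $A_n$ that degrade at each step, so for fixed $t$ the supremum over $n$ may well be finite; the honest conclusion is only $\inf_{B(0,t^{\beta/\alpha})}V(t,\cdot)\to\infty$ as $t\to\infty$. You flag "controlling the accumulated constants" as the quantitative heart but supply no mechanism, and none exists along this route. The paper closes the argument differently: it extracts from the iteration only the weaker a priori bound $\inf_{x\in B(0,t^{\beta/\alpha})}V(t,x)\geq M$ for arbitrary $M$ and $t$ large (Proposition \ref{apriori-1}), converts this via Proposition \ref{apriori-2} into a lower bound $\gtrsim T$ for the Duhamel contribution of $[0,T]$ evaluated at times $T+t$ with $t$ small, and then runs an ODE comparison $\inf_{B(0,1)}V(t+T,\cdot)\gtrsim T+\int_0^t\inf_{B(0,1)}V(s+T,\cdot)^{1+\eta}\,\d s$ on the short window $t\in[0,(1/2)^{\alpha/\beta}]$; since the blow-up time of $h'=h^{1+\eta}$, $h(0)\sim T$, is of order $T^{-\eta}$, taking $T$ large forces blow-up inside the window. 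This second stage is the missing idea in your proposal.
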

The above theorem generalises Theorem 18.3 of \cite{Souplet} but the method is different.  The presence of the time fractional derivative makes it that when $\alpha\leq d$, the heat kernel has a singularity at $x=0$ for all $t>0$. This partly motivated the proof of the next theorem. 
\begin{theorem}\label{thm-norm-existence}
Suppose that $\eta> \alpha/\beta d$. Then, there are initial conditions $V_0$ for which the solutions to \eqref{eq:additional source} exist globally. 
\end{theorem}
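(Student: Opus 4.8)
The plan is to construct an explicit supersolution to the integral equation \eqref{mild} and then invoke a comparison/monotone-iteration argument. I would start from the linear term $W(t,x):=\int_{\R^d}G(t,x-y)V_0(y)\,\d y$ and look for a global solution of the form $V=\lambda W$ for a small constant $\lambda>0$, or more flexibly $V(t,x)\le M W(t,x)$ for a suitable constant $M$. Plugging this ansatz into the right-hand side of \eqref{mild}, the requirement becomes
\begin{equation*}
M W(t,x)\ge W(t,x)+M^{1+\eta}\int_0^t\int_{\R^d}G(t-s,x-y)W(s,y)^{1+\eta}\,\d y\,\d s,
\end{equation*}
so it suffices to bound the nonlinear Duhamel term by a constant multiple of $W(t,x)$ itself. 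This is where the definition of $\eta^\ast$ enters: choosing $V_0$ so that $\eta^\ast$ is attained (or nearly attained) at some exponent $a$ with $a\eta>1$ — which is exactly the regime $\eta>\alpha/\beta d=1/\eta^\ast$ when $\eta^\ast=\beta d/\alpha$ — gives the pointwise decay bound $W(s,y)\le C s^{-a}$, and one also needs a matching lower bound $W(t,x)\ge c\, (\text{something})$ along the relevant space-time region to close the inequality. A natural concrete choice, following the classical Fujita theory, is $V_0(y)=\varepsilon(1+|y|^2)^{-\gamma}$ for appropriate $\gamma$, or a Gaussian/heat-kernel-type profile, for which $W$ can be estimated sharply using the two-sided heat kernel bounds on $G$ recalled in the paper (see \eqref{Eq:Green2}).

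The key steps, in order, are: (1) fix an initial datum $V_0$ realizing the supremum defining $\eta^\ast$, so that $W(t,x)\le C\min(1,t^{-a})\Phi(x)$ for the optimal $a=\beta d/\alpha$ (up to $\varepsilon$-loss) and a spatial factor $\Phi$; (2) estimate the nonlinear term $N[W](t,x):=\int_0^t\int_{\R^d}G(t-s,x-y)W(s,y)^{1+\eta}\,\d y\,\d s$ and show $N[W](t,x)\le C' W(t,x)$ uniformly in $(t,x)$, using the subordination/heat-kernel bounds on $G$ together with the scaling identity $G(t,x)=t^{-\beta d/\alpha}\mathcal{G}(x t^{-\beta/\alpha})$ — here the time integral $\int_0^t (t-s)^{\text{exponent}} s^{-a(1+\eta)}\,\d s$ converges precisely because $a(1+\eta)>1$, which is the strict inequality $\eta>\alpha/\beta d$; (3) with $C'$ in hand, pick $M$ large enough that $M\ge 1$ and $M^\eta C'\le M-1$, i.e. choose the amplitude of $V_0$ small (rescaling $V_0\mapsto \delta V_0$ scales $W\mapsto \delta W$ and $N[W]\mapsto \delta^{1+\eta}N[W]$, so for $\delta$ small the nonlinear contribution is negligible); (4) run Picard iteration $V_{n+1}=W+N[V_n]$ starting from $V_0^{(0)}=0$, note it is monotone increasing and dominated by $MW<\infty$, hence converges to a genuine global integral solution.

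The main obstacle I expect is step (2): controlling the space-time convolution $N[W]$ pointwise by $W$ itself, rather than just showing finiteness. Because $G$ lacks the semigroup property (as the authors emphasize), one cannot simply collapse the double integral; instead one must carefully split the region of integration (e.g. $|y|\lesssim |x|$ versus $|y|\gg|x|$, and $s\le t/2$ versus $s>t/2$) and match the resulting bounds to the two-sided estimate for $W$, which itself behaves differently for $\alpha<d$ (where $G$ is singular at the origin) versus $\alpha\ge d$. Getting the spatial profile $\Phi$ to be reproduced — not merely dominated by an integrable function — is the delicate point, and it is likely cleanest to choose $V_0$ so that $W(t,x)$ is comparable to $G(1+t,x)$ or to a self-similar profile, exploiting the explicit scaling of $G$ so that the inequality $N[W]\le C'W$ becomes, after rescaling, a single convergent scalar integral whose finiteness is equivalent to $\eta>\alpha/\beta d$.
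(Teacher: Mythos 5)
Your plan is a pointwise supersolution/comparison argument: dominate the Picard iterates by $MW$ where $W=\mathcal{G}V_0$, with the crux being the reproduction estimate $N[W]\lesssim W$. This is genuinely different from the paper's proof of this theorem, which works in the weighted space $\|V\|_{p,\theta}=\sup_{t>0}t^{\theta}\|V(t,\cdot)\|_{L^p(\R^d)}$ with $\theta=\frac{\beta d}{\alpha}(\frac{\alpha}{\beta d\eta}-\frac1p)$, runs a contraction argument there for $\|V_0\|_{L^{q_c}}$ small ($q_c=\beta d\eta/\alpha$), and then bootstraps through a chain of exponents $p<p_1<\cdots$ up to $L^\infty$. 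Your route is instead the one the authors reserve for Theorem \ref{thm-decay}, and there they impose $d<\alpha$ --- which, since $\alpha\in(0,2)$, forces $d=1$.

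That restriction is exactly where your step (2) has a genuine gap. The pointwise collapse $\int_{\R^d}G(s,x-y)G(t,y-z)\,\d y\lesssim G(t+s,x-z)$, and more importantly the estimate of Proposition \ref{G2}, rest on the two-sided comparison $G(t,x)\asymp p(t^\beta,x)$, which holds only for $\alpha>d$: when $d\geq\alpha$ the kernel satisfies $G(t,x)\asymp t^{-\beta}|x|^{-(d-\alpha)}$ (with a logarithm when $d=\alpha$) near $x=0$, so $\sup_yG(s+\gamma,y)=\infty$ and the device of pulling out $\sup_yG(s+\gamma,y)^{\eta}$ is unavailable. Worse, if you insist on $W\asymp G(\cdot+\gamma,\cdot)$ as you propose, then $W^{1+\eta}$ has a non-integrable local singularity as soon as $(d-\alpha)(1+\eta)\geq d$, so $N[W]$ is not even finite, let alone $\lesssim W$. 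You flag this dichotomy but offer no mechanism to handle $d\geq\alpha$; since the theorem is claimed in all dimensions, this is the missing idea, and it is precisely what pushes the authors into the $L^p$ framework, where only the integrated bound $\|G(t,\cdot)\|_{L^q}\lesssim t^{-\frac{\beta d}{\alpha}(1-\frac1q)}$ (valid for $1-\frac1q<\frac{\alpha}{d}$ in every dimension) is needed. Your steps (3)--(4) (smallness of the amplitude and monotone iteration) are fine and would close the argument in $d=1<\alpha$, where your proof essentially reproduces Theorem \ref{thm-decay}; to recover the full statement you would need either to replace the pointwise profile by an $L^p$-in-space, power-decay-in-time profile (i.e.\ the paper's norm), or to find a bounded self-similar supersolution, which the singular kernel does not readily supply.
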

In fact the above result will be a consequence of another result which says that for some $p>1$,  $\|V(t,\,\cdot)\|_{L^p(\R^d)}$ decays polynomially. This is also  an extension over previously known results.  We will also show that the solution is jointly continuous whenever it exists. Even though regularity properties of the solution is not a priority here, our results in this direction seems to be new. When $d<\alpha$, we have better estimates on the heat kernel so that we can establish the following stronger result.  Since $\alpha\in(0,\,2)$, this condition reduces the dimension to $d=1$. The theorem below significantly extends Theorem 20.1 of \cite{Souplet}.

 \begin{theorem}\label{thm-decay}
Let $d<\alpha$ and $\eta>\alpha/\beta d$. Suppose that for some small $\delta>0$,  $V_0$ satisfies
\begin{equation*}
0\leq V_0(x)\leq \delta G(\gamma,\,x)\quad \text{for\,all}\quad x\in \R^d,
\end{equation*}
where $\gamma$ is a positive constant. We then have,
\begin{equation*}
V(t,\,x)\lesssim G(t+\gamma,\,x).
\end{equation*}
Moreover, the solution is jointly continuous on $(0,\,\infty)\times \R^d$.
\end{theorem}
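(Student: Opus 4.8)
The plan is to prove Theorem~\ref{thm-decay} by a contraction-mapping / monotone-iteration argument adapted to the mild formulation \eqref{mild}, using the Gaussian-type bound $G(t,x)\lesssim G(t+\gamma,x)$-style self-improving estimates that should be available from the heat-kernel analysis alluded to around \eqref{Eq:Green2}. First I would record the precise pointwise bounds on $G$ in the regime $d<\alpha$ (so $d=1$); here the kernel is bounded near $x=0$ for each fixed $t$, which is exactly what makes the stronger conclusion possible. The key analytic input is a convolution inequality of the form
\[
\int_0^t\!\!\int_{\R^d} G(t-s,x-y)\,G(s+\gamma,y)^{1+\eta}\,\d y\,\d s \;\le\; C\, G(t+\gamma,x),
\]
valid for $\eta>\alpha/\beta d$, where the gain of integrability in the source term (raising to the power $1+\eta$) combined with the polynomial-in-time decay of $\|G(s+\gamma,\cdot)\|_{L^\infty}$ or $\|G(s+\gamma,\cdot)\|_{L^{q}}$ makes the $s$-integral converge and reproduces a single heat-kernel profile. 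This is the technical heart of the argument and I would isolate it as a lemma.

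With that lemma in hand, the existence-and-bound part runs as follows. Define the operator $\Phi(V)(t,x) = \int G(t,x-y)V_0(y)\,\d y + \int_0^t\!\int G(t-s,x-y)V(s,y)^{1+\eta}\,\d y\,\d s$ on the cone $\mathcal{K}_M = \{V \ge 0 : V(t,x)\le M\,G(t+\gamma,x)\}$. Using $V_0\le \delta G(\gamma,\cdot)$ one gets $\int G(t,x-y)V_0(y)\,\d y \le \delta\, G(t+\gamma,x)$ by the (sub-)convolution property of $G$ in this regime — if the exact semigroup identity fails one uses the established Chapman–Kolmogorov-type inequality for $G$ from the cited heat-kernel estimates — and then the convolution lemma gives $\Phi(\mathcal{K}_M)\subseteq \mathcal{K}_M$ provided $\delta + C M^{1+\eta} \le M$, which is solvable for $M$ of order $\delta$ once $\delta$ is small. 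Monotonicity of $\Phi$ (it is increasing in $V$) lets me run Picard iteration from $V^{(0)}=0$: the sequence $V^{(n+1)}=\Phi(V^{(n)})$ is nondecreasing, stays in $\mathcal{K}_M$, hence converges pointwise to a limit $V$ which by monotone convergence solves \eqref{mild} and satisfies $V(t,x)\lesssim G(t+\gamma,x)$. This simultaneously proves Theorem~\ref{thm-decay}'s bound and re-proves Theorem~\ref{thm-norm-existence} in the case $d<\alpha$.

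For the joint continuity on $(0,\infty)\times\R^d$ I would argue that the two terms in \eqref{mild} are each continuous. The first term $\int G(t,x-y)V_0(y)\,\d y$ is continuous by dominated convergence using continuity of $(t,x)\mapsto G(t,x)$ away from the origin together with the domination $V_0\le\delta G(\gamma,\cdot)$ and the established integrability of $G$. For the nonlinear term, the bound $V(s,y)^{1+\eta}\le M^{1+\eta} G(s+\gamma,y)^{1+\eta}$ supplies an integrable majorant (by the convolution lemma, the integral is finite and in fact $\lesssim G(t+\gamma,x)$), so again dominated convergence applies, handling the moving singularity of $G(t-s,\cdot)$ at $s\uparrow t$ by splitting the $s$-integral into $[0,t-\epsilon]$ and $[t-\epsilon,t]$ and using the smallness of the second piece uniformly near $(t,x)$. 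A mild equicontinuity/uniform-integrability estimate near $s=t$, which follows from the same kernel bounds, closes this.

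The main obstacle I anticipate is the convolution lemma itself — specifically, getting the constant-preserving estimate $\int_0^t\!\int G(t-s,x-y)G(s+\gamma,y)^{1+\eta}\,\d y\,\d s\lesssim G(t+\gamma,x)$ with the sharp exponent threshold $\eta>\alpha/\beta d$. The difficulty is twofold: the absence of the semigroup property for $G$ (flagged in the introduction) means one cannot just convolve kernels and must instead split into the regimes $s\le t/2$ and $s\ge t/2$ and use the two-sided bounds on $G$ separately on each (small-time versus large-time behaviour, and small-$|x|$ versus large-$|x|$ behaviour of the Mittag-Leffler/subordinated kernel), and secondly one must track that the power $1+\eta$ in the source is exactly enough to make the time integral converge at $s=0$ while the spatial integral converges at $y$ near the $x$-dependent singularity. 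I would expect to consume most of the work in a careful case analysis of $G(t+\gamma,x)$ for $|x|$ small versus $|x|\gtrsim (t+\gamma)^{\beta/\alpha}$, leaning on the refined estimates from \cite{FooNan} and \cite{KKW} and the subordination formula \eqref{Eq:Green2}.
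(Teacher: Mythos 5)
Your proposal is correct and follows essentially the same route as the paper: compare $G(t,\cdot)$ with $p(t^\beta,\cdot)$ (valid since $d<\alpha$) to get the sub-semigroup inequality $\int G(t-s,x-y)G(s+\gamma,y)\,\d y\lesssim G(t+\gamma-s+s,x)$, prove the convolution estimate $\int_0^t\int G(t-s,x-y)G(s+\gamma,y)^{1+\eta}\,\d y\,\d s\lesssim G(t+\gamma,x)$ as the key lemma, and run a Picard iteration in the weighted norm $\sup_{t,x}|V(t,x)/G(t+\gamma,x)|$. The only simplification worth noting is that the convolution lemma needs no $s\le t/2$ versus $s\ge t/2$ case analysis: one pulls out $\sup_y G(s+\gamma,y)^{\eta}\lesssim (s+\gamma)^{-\eta\beta d/\alpha}$, applies the sub-semigroup bound to the remaining single factor, and uses $\eta\beta d/\alpha>1$ together with $\gamma>0$ to make $\int_0^\infty (s+\gamma)^{-\eta\beta d/\alpha}\,\d s$ finite.
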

We have therefore shown that $\eta_c=\frac{\alpha}{\beta d}$. This is consistent with the following characterisation which says that this exponent is the reciprocal of the following quantity,
\begin{equation*}
\eta^\ast:=\sup_{a>0}\left\{\sup_{t\in (0,\,\infty),x\in \R^d}t^a\int_{\R^d}G(t,\,x-y)V_0(y)\,\d y<\infty\right\}.
\end{equation*}
Indeed one can show that the supremum of $\int_{\R^d}G(t,\,x-y)V_0(y)\,\d y$ behaves like $t^{-\beta d/\alpha}$.  This characterisation also gives $\eta_c=0$ when \eqref{usual} is solved on a bounded domain with Dirichlet boundary condition. See page 108 of \cite{Souplet} where this is described in more details.  Our next result shows that this is not true when one looks at the corresponding equation with a time-fractional derivative.  Fix $R>0$ and consider the following
\begin{equation}\label{eq:dir-additional-source}
\begin{split}
 \partial^\beta_tV(t, x)&=
 -(-\Delta)^{\alpha/2} V(t,x)+I^{1-\beta}_t[ V(t, x)^{1+\eta}]\quad{t>0}\quad\text{and}\quad x\in B(0,\,R),\\
 V(t, x)&= 0\ \ x\in B(0,\,R)^c\\
 V(0, x)&=V_0(x) \ \ x\in B(0,\,R).
 \end{split}
 \end{equation}
Here $-(-\Delta)^{\alpha/2}$ denotes the generator of  $\alpha$-stable process killed upon exiting the ball $B(0,\,R)$.  We will again look at the integral formulation of the equation,
\begin{equation}\label{mild2}
V(t, x)=
\int_{B(0,\,R)} G_D(t,\,x,\,y)V_0(y)\,\d y + \int_{B(0,\,R)}\int_0^t G_D(t-s,\,x,\,y)V(s, y)^{1+\eta}\d s\,\d y,
\end{equation}
where now $G_D(t,\,x,\,y)$ is the Dirichlet heat kernel of the associated operator. Denote $\phi_1$ to be  the first  eigenfunction of the above Dirichlet fractional Laplacian and set
\begin{equation*}
K_{V_0, \phi_1}:=\int_{B(0,\,R)}V_0(x)\phi_1(x)\,\d x.
\end{equation*}
We are now ready to state the final theorem of this paper.  This is a consequence of the spectral decomposition of the heat kernel in terms of Mittag-Leffler functions and the proof uses the {\it eigenfunction method} of \cite{Kaplan}. The first part of this theorem is in sharp contrast with Theorem 19.2 of \cite{Souplet}.
\begin{theorem}\label{thm-Dirichlet-additional-drift}
Suppose that $0<\eta<1/\beta-1$, then there is no global solution to \eqref{eq:dir-additional-source} whenever $K_{V_0, \phi_1}>0$. For any $\eta>0$,  there is no global solution whenever $K_{V_0, \phi_1}>0$ is large enough.
\end{theorem}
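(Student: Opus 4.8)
The plan is to implement the eigenfunction method of \cite{Kaplan} together with the spectral decomposition of the Dirichlet heat kernel in Mittag--Leffler functions. Let $0<\mu_1<\mu_2\le\cdots$ be the eigenvalues of the Dirichlet fractional Laplacian on $B(0,R)$ with $L^2$-orthonormal eigenfunctions $\phi_1,\phi_2,\dots$, where $\phi_1>0$ on $B(0,R)$. The spectral representation
\begin{equation*}
G_D(t,x,y)=\sum_{n\ge 1}E_\beta(-\mu_n t^\beta)\phi_n(x)\phi_n(y),
\end{equation*}
with $E_\beta$ the Mittag--Leffler function, gives $\int_{B(0,R)}G_D(t,x,y)\phi_1(x)\,\d x=E_\beta(-\mu_1 t^\beta)\phi_1(y)$. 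Multiplying \eqref{mild2} by $\phi_1(x)$ and integrating over $B(0,R)$, the quantity $F(t):=\int_{B(0,R)}V(t,x)\phi_1(x)\,\d x$ satisfies
\begin{equation*}
F(t)=E_\beta(-\mu_1 t^\beta)K_{V_0,\phi_1}+\int_0^t E_\beta(-\mu_1 (t-s)^\beta)\Big(\int_{B(0,R)}V(s,y)^{1+\eta}\phi_1(y)\,\d y\Big)\d s .
\end{equation*}
Jensen's inequality applied with the probability measure $\phi_1(y)\,\d y/\|\phi_1\|_{L^1}$ bounds the inner integral below by $\|\phi_1\|_{L^1}^{-\eta}F(s)^{1+\eta}$, so with $c_0:=\|\phi_1\|_{L^1}^{-\eta}$ and $K:=K_{V_0,\phi_1}$,
\begin{equation}\label{eq:proposal-key}
F(t)\ge E_\beta(-\mu_1 t^\beta)\,K+c_0\int_0^t E_\beta(-\mu_1 (t-s)^\beta)F(s)^{1+\eta}\,\d s .
\end{equation}

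Next I would localise to a bounded time interval. Since $0<\beta<1$, the map $x\mapsto E_\beta(-x)$ is positive and non-increasing on $[0,\infty)$, so for $0\le s\le t\le T$ one has $E_\beta(-\mu_1(t-s)^\beta)\ge E_\beta(-\mu_1 T^\beta)=:b_T>0$, and \eqref{eq:proposal-key} reduces on $[0,T]$ to $F(t)\ge b_T K+c_0 b_T\int_0^t F(s)^{1+\eta}\,\d s$. Let $\Phi$ be the solution of the Bernoulli equation $\Phi'=c_0 b_T\Phi^{1+\eta}$, $\Phi(0)=b_T K$, namely $\Phi(t)=\bigl((b_T K)^{-\eta}-\eta c_0 b_T t\bigr)^{-1/\eta}$, which is increasing, blows up at $T^\ast:=\bigl(\eta c_0 b_T^{1+\eta}K^{\eta}\bigr)^{-1}$, and satisfies $\int_0^{T^\ast}\Phi(s)^{1+\eta}\,\d s=\infty$. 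A monotone iteration (starting from $F_0\equiv b_T K$ and $F_{n+1}(t)=b_TK+c_0 b_T\int_0^t F_n(s)^{1+\eta}\,\d s$, which increase to $\Phi$ while staying $\le F$) gives $F\ge\Phi$ on $[0,\min(T,T^\ast))$. Hence, as soon as $T^\ast\le T$, we get $F(s)\to\infty$ as $s\uparrow T^\ast$, so $\int_0^{T^\ast}F(s)^{1+\eta}\,\d s=\infty$; plugging this back into \eqref{eq:proposal-key} forces $F(t)=+\infty$ for every $t\in[T^\ast,T]$, so the solution cannot be global.

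It remains to secure $T^\ast\le T$, i.e.\ $\eta c_0\,T\,b_T^{1+\eta}K^{\eta}\ge 1$. For the second assertion fix $T=1$: then $b_1=E_\beta(-\mu_1)$ is a fixed positive constant and the inequality holds once $K\ge\bigl(\eta c_0 b_1^{1+\eta}\bigr)^{-1/\eta}$, i.e.\ whenever $K_{V_0,\phi_1}$ is large enough, with no restriction on $\eta$. For the first assertion, use the classical asymptotics $E_\beta(-\mu_1 T^\beta)\sim\bigl(\mu_1\Gamma(1-\beta)T^\beta\bigr)^{-1}$ as $T\to\infty$, so that $T\,b_T^{1+\eta}$ is comparable to $T^{1-\beta(1+\eta)}$ for large $T$. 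Since the hypothesis $\eta<1/\beta-1$ is exactly $\beta(1+\eta)<1$, one has $T\,b_T^{1+\eta}\to\infty$, and for every $K>0$ a sufficiently large $T$ satisfies $\eta c_0 T b_T^{1+\eta}K^\eta\ge 1$; this gives blow-up for all $V_0$ with $K_{V_0,\phi_1}>0$.

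The substantive points are two. First, one must justify the spectral representation of $G_D$ and the projection identity above; this is where the Mittag--Leffler functions enter, and their algebraic decay $E_\beta(-\mu_1 t^\beta)\asymp t^{-\beta}$ --- as opposed to the exponential decay $e^{-\mu_1 t}$ at $\beta=1$ --- is precisely why a Fujita-type dichotomy survives on a bounded domain. Second, the passage from the Volterra inequality \eqref{eq:proposal-key} to a finite-time blow-up must use only finiteness of $F$ rather than local boundedness; this is handled by the monotone iteration together with the divergence of $\int_0^{T^\ast}\Phi^{1+\eta}$. After that, the quantitative choice of $T$ is routine.
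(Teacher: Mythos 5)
Your proof is correct and follows essentially the paper's own route: Kaplan's eigenfunction method with the Mittag--Leffler spectral representation of $G_D$, Jensen's inequality to obtain the Volterra inequality for $F(t)=\int_B V(t,x)\phi_1(x)\,\d x$, and an ODE comparison, with the dichotomy at $\beta(1+\eta)=1$ driven by the algebraic decay $E_\beta(-\mu_1 t^\beta)\asymp t^{-\beta}$. The only difference is technical packaging: you freeze the kernel at its minimum $b_T=E_\beta(-\mu_1 T^\beta)$ on a window $[0,T]$, compare with an autonomous Bernoulli ODE, and then optimise over $T$, whereas the paper substitutes $G(t)=t^\beta F(t)$ and integrates the non-autonomous inequality with kernel $s^{-\beta(1+\eta)}$; both yield the same quantitative conditions.
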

At this point we do not investigate the dichotomy as in the equation on the whole plane.  One can perhaps argue that since the solution to the Dirichlet equation is smaller than that on the whole plane, we can find global solution when $\eta$ is large enough.

Here is a plan of the article. Section \ref{sect-prelim} contains estimates needed for the proof of Theorem \ref{thm-additional-drift}. This is given in Section \ref{proof2}. Section \ref{proof3} is devoted to the proof of Theorem \ref{thm-norm-existence} while the proofs of Theorem \ref{thm-decay} and Theorem \ref{thm-Dirichlet-additional-drift} are given in Section \ref{proof4} and Section \ref{proof5} respectively.  We use the notation $f(t,x)\lesssim (\gtrsim)g(t,x)$ when there exists a constant $C$ independent of $(t,x)$ such that $f(t,x)\leq (\geq) C g(t,x)$ for all $(t, x)\in (0,\,\infty)\times \R^d$.

\section{Some estimates}\label{sect-prelim}
We begin this section by giving a brief description of the process associated with \eqref{H1}. { However; we will not use this process directly.  Instead we will use it } to derive a suitable representation of its heat kernel. See \cite{CME} and \cite{mnv-09}  for more information.
Let $X_t$ denote a symmetric $\alpha$ stable process associated with the fractional Laplacian. Its density function will be denoted by $p(t,\,x)$. This is characterized through the Fourier transform which is given by

\begin{equation*}
\widehat{p(t,\,\xi)}=\int_\rd e^{\xi\cdot x}p(t,x)\d x=e^{-t|\xi|^\alpha}.
\end{equation*}
The following properties of $p(t,x)$ will be needed in this paper:
 \begin{itemize}
\item \begin{equation*}\label{stable-kernel-scaling}
p(st, x)=s^{-d/\alpha}p(t,s^{-1/\alpha}x).
\end{equation*}
\item
\begin{equation}\label{t-deri}
\frac{\partial p(t,\,x)}{\partial t}\lesssim \frac{1}{t}p(t,\,x).
\end{equation}
\item
\begin{equation}\label{s-deri}
\nabla p(t,\,x)\lesssim \frac{1}{t^{1/\alpha}}p(t,\,x).
\end{equation}
\item For all $t>0$, $x,y \in \R^d$ and $\rho\in[0,\,1]$,
\begin{equation}\label{x-diff}
{|p(t,\,y)-p(t,\,x)|\lesssim \frac{|x-y|^\rho}{t^{\rho/\alpha}}[p(t,\,x/2)+p(t,\,y/2)].}
\end{equation}

\end{itemize}
We also have
\begin{equation}\label{heatalpha}
  c_1\bigg(t^{-d/\alpha}\wedge \frac{t}{|x|^{d+\alpha}}\bigg)\leq p(t,x)\leq c_2\bigg(t^{-d/\alpha}\wedge \frac{t}{|x|^{d+\alpha}}\bigg),
 \end{equation}
 for some positive constants $c_1$ and $c_2$; see for instance \cite{kolo}.  The process associated with \eqref{H1} is not Markov and the heat kernel $G(t,\,x)$ does not satisfy the semigroup property.  We describe this process next. Let $D=\{D_r,\,r\ge0\}$ be a $\beta$-stable subordinator with $\beta\in (0,1)$. Its Laplace transform is given by $\E(e^{-sD_t})=e^{-ts^\beta}$. Let $E_t$ be its first passage time. The process which we will be interested in is given { by} the time changed process $X_{E_t}$. This is the process associated with the time fractional heat equation given by \eqref{H1}.  Its density $G(t,\,x)$ is given by a simple  conditioning as follows

\begin{equation}\label{Eq:Green1}
G(t,\,x) = \int_{0}^\infty p(s,\,x) f_{E_t}(s)\d s,
\end{equation}
where
\begin{equation*}\label{Etdens0}
f_{E_t}(x)=t\beta^{-1}x^{-1-1/\beta}g_\beta(tx^{-1/\beta}).
\end{equation*}
The function $g_\beta(\cdot)$ is the density function of $D_1$ and is infinitely differentiable on the entire real line, with $g_\beta(u)=0$ for $u\le 0$. After a change of variable, \eqref{Eq:Green1} turns into
\begin{equation}\label{Eq:Green2}
G(t,\,x)=\int_0^\infty p\left(\left(\frac{t}{u}\right)^\beta, x\right)g_\beta(u)\,\d u,
\end{equation}
which makes the following asymptotic properties particularly useful,
\begin{equation}\label{Eq:gbeta0}
g_\beta(u)\sim K(\beta/u)^{(1-\beta/2)/(1-\beta)}\exp\{-|1-\beta|(u/\beta)^{\beta/(\beta-1)}\}\quad\mbox{as}\,\, u\to0+,
\end{equation}
and
\begin{equation}\label{Eq:gbetainf}
g_\beta(u)\sim\frac{\beta}{\Gamma(1-\beta)}u^{-\beta-1} \quad\mbox{as}\,\, u\to\infty.
\end{equation}
Using \eqref{Eq:Green2} together with \eqref{stable-kernel-scaling}, we obtain
\begin{itemize}
\item \begin{equation}\label{time-kernel-scaling}
G(st, x)=s^{-\beta d/\alpha}G(t,s^{-\beta/\alpha}x),
\end{equation}
\end{itemize}
As explained above, our method will be partly inspired by the following inequality which was first proved in \cite{FooNan} and subsequently generalised in \cite{KKW}.

 \begin{equation}\label{heat}
 c_1\bigg(t^{-\beta d/\alpha}\wedge \frac{t^\beta}{|x|^{d+\alpha}}\bigg)\leq G(t,\,x)\leq c_2\bigg(t^{-\beta d/\alpha}\wedge \frac{t^\beta}{|x|^{d+\alpha}}\bigg),
 \end{equation}
where the upper bound is valid for $\alpha > d$ only. In this case, we immediately have
\begin{equation}\label{heat-comp}
p(t^\beta,\,x)\lesssim G(t,\,x)\lesssim p(t^\beta,\,x),
\end{equation}
which we will use to compensate for the lack of the semigroup property. If $|x|\leq t^{\beta/\alpha}$, then when $\alpha=d$, we have
\begin{equation*}
t^{-\beta}\log\left( \frac{2}{|x|t^{-\beta/\alpha}}\right)\lesssim G(t,\,x)\lesssim t^{-\beta}\log\left( \frac{2}{|x|t^{-\beta/\alpha}}\right)
\end{equation*}
and when $d>\alpha$,
\begin{equation*}
\frac{t^{-\beta}}{|x|^{d-\alpha}}\lesssim G(t,\,x)\lesssim \frac{t^{-\beta}}{|x|^{d-\alpha}}.
\end{equation*}
When $|x|\geq t^{\beta/\alpha}$, then $G(t,\,x)$ satisfy the bounds given by \eqref{heat} even $d\geq \alpha$. This was shown in \cite{KKW}.  We have the following estimates on the derivatives of the heat kernel.

\begin{proposition}\label{der-heat}
For any $t>0$ and $x\in \R^d$, we have
\begin{itemize}
\item[(a)]\begin{equation}\label{time-deri}
\frac{\partial G(t,\,x)}{\partial t}\lesssim \frac{1}{t}G(t,\,x).
\end{equation}
\item[(b)]\begin{equation}\label{spatial-deri}
\nabla G(t,\,x)\lesssim \frac{1}{t^{\beta/\alpha}}G(t,\,x),
\end{equation}
whenever $\alpha>1$.

\item[(c)] Let $\rho<\alpha$, then we have
\begin{equation}\label{space-incre}
\int_{\R^d}|G(t,\,x+h)-G(t,\,x)|f(t,\,x)\,\d x\lesssim \frac{|h|^{\rho}}{t^{\rho\beta/\alpha}},
\end{equation}
where $h\in \R^d$ and $f(t,\,x)$ is a bounded function for each $t>0$.
\end{itemize}
\end{proposition}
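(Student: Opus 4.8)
The common engine is the subordination identity \eqref{Eq:Green2}, which presents $G(t,x)$ as the average of $p((t/u)^{\beta},x)$ against the probability density $g_\beta$. In all three parts the plan consists of the same three moves: push the $t$-derivative, the $x$-gradient, or the finite difference through the integral sign; apply the already-recorded estimate for the stable kernel $p$ --- \eqref{t-deri} for (a), \eqref{s-deri} for (b), \eqref{x-diff} for (c) --- which produces an extra power of $u$ inside the integral; and then dispose of the leftover $u$-integral using the tail behaviour \eqref{Eq:gbeta0}--\eqref{Eq:gbetainf} of $g_\beta$, equivalently via finiteness of the moment $\int_0^{\infty}u^{\gamma}g_\beta(u)\,\d u=\E D_1^{\gamma}$, which holds exactly when $\gamma<\beta$ (every negative moment being finite since $g_\beta$ decays faster than any power at the origin). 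Differentiation under the integral sign is justified throughout by dominated convergence, with dominating functions supplied by \eqref{heatalpha} and \eqref{t-deri}--\eqref{s-deri}.

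For (a): by the chain rule, $\frac{\partial}{\partial t}p((t/u)^{\beta},x)$ equals the time-derivative of $p$ evaluated at time $(t/u)^{\beta}$ times $\frac{\beta}{t}(t/u)^{\beta}$, which by \eqref{t-deri} is $\lesssim \frac{\beta}{t}\,p((t/u)^{\beta},x)$; integrating against $g_\beta(u)\,\d u$ and reading \eqref{Eq:Green2} backwards gives $\partial_t G(t,x)\lesssim\frac{\beta}{t}G(t,x)\lesssim\frac1t G(t,x)$. Here the two powers of $(t/u)^{\beta}$ cancel, so no moment condition is needed. For (c): write $G(t,x+h)-G(t,x)=\int_0^{\infty}\big(p((t/u)^{\beta},x+h)-p((t/u)^{\beta},x)\big)g_\beta(u)\,\d u$, apply \eqref{x-diff} with H\"older exponent $\rho$ (legitimate for $\rho\leq1$, which is the range needed in practice; when $\alpha\leq1$ the hypothesis already forces $\rho<\alpha<1$), multiply by $f(t,x)$, use boundedness of $f(t,\cdot)$, and integrate in $x$. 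Since $\int_{\R^d}p(s,z/2)\,\d z=2^{d}\int_{\R^d}p(s,w)\,\d w=2^{d}$, the $x$-integral collapses to a constant, leaving $\frac{|h|^{\rho}}{t^{\rho\beta/\alpha}}\int_0^{\infty}u^{\rho\beta/\alpha}g_\beta(u)\,\d u=\frac{|h|^{\rho}}{t^{\rho\beta/\alpha}}\,\E D_1^{\rho\beta/\alpha}$, and this moment is finite precisely when $\rho\beta/\alpha<\beta$, i.e.\ when $\rho<\alpha$. That is exactly where the hypothesis is used.

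For (b) the same scheme with \eqref{s-deri} gives $\nabla G(t,x)\lesssim\frac{1}{t^{\beta/\alpha}}\int_0^{\infty}u^{\beta/\alpha}\,p((t/u)^{\beta},x)\,g_\beta(u)\,\d u$, so everything reduces to absorbing the stray factor $u^{\beta/\alpha}$ back into $G(t,x)$. Using the scaling \eqref{time-kernel-scaling} one may set $t=1$; the range $u\in(0,1]$ is harmless since $u^{\beta/\alpha}\leq1$ there, while for $u\geq1$ one must combine $g_\beta(u)\asymp u^{-\beta-1}$ from \eqref{Eq:gbetainf} with the two regimes of \eqref{heatalpha} for $p(u^{-\beta},x)$, separated at $|x|\asymp u^{-\beta/\alpha}$, and with the spatial decay of $\nabla p$ used more sharply than \eqref{s-deri} states it; alternatively, when $d<\alpha$, the comparison \eqref{heat-comp} short-circuits this by reducing the claim to the analogous gradient bound for $p(t^{\beta},\cdot)$. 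I expect this absorption step to be the main obstacle of the proposition: a naive appeal to \eqref{s-deri} alone does not close, and the restriction $\alpha>1$ is precisely what makes the surviving integral converge.
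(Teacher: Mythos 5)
Your parts (a) and (c) are correct and take exactly the paper's route: part (c) is essentially word for word the computation in the paper's proof (apply \eqref{x-diff} under the subordination integral \eqref{Eq:Green2}, integrate out $x$ using $\int_{\R^d}p(s,z/2)\,\d z=2^d$ and the boundedness of $f(t,\cdot)$, and note that $\int_0^\infty u^{\rho\beta/\alpha}g_\beta(u)\,\d u<\infty$ precisely when $\rho<\alpha$ by \eqref{Eq:gbetainf}); for part (a) the cancellation of the two factors $(t/u)^{\beta}$ that you exhibit is the entire content of the paper's one-sentence assertion. Your caveat that \eqref{x-diff} is only recorded for $\rho\in[0,1]$ while (c) nominally permits $\rho<\alpha\leq 2$ is a fair remark about the paper itself.

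For part (b) you stop short of a proof, and the obstacle you flag is a genuine gap --- one the paper's own proof does not address, since it disposes of (a) and (b) with the same one-line appeal to \eqref{s-deri}. Pushing \eqref{s-deri} through \eqref{Eq:Green2} leaves $t^{-\beta/\alpha}\int_0^\infty u^{\beta/\alpha}p((t/u)^{\beta},x)\,g_\beta(u)\,\d u$, and this weighted integral is \emph{not} uniformly dominated by $G(t,x)$: taking $t=1$ and $|x|$ small, the range $1\leq u\leq |x|^{-\alpha/\beta}$, where $p(u^{-\beta},x)\asymp u^{\beta d/\alpha}$ and $g_\beta(u)\asymp u^{-\beta-1}$, contributes a term of order $|x|^{-(d+1-\alpha)}$, whereas $G(1,x)$ is only of order $|x|^{-(d-\alpha)}$ (bounded if $d<\alpha$, logarithmic if $d=\alpha$); the ratio degenerates like $|x|^{-1}$. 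Nor is this an artifact of the method: since $p(s,\cdot)$ is radially decreasing, the contributions to $\nabla G(1,x)=\int_0^\infty\nabla p(u^{-\beta},x)g_\beta(u)\,\d u$ do not cancel, and one finds $|\nabla G(1,x)|\asymp |x|^{\alpha-d-1}\to\infty$ as $x\to 0$ (recall $\alpha<d+1$ always), so \eqref{spatial-deri} itself fails near $x=0$ for fixed $t$. The estimate is salvageable on the region $|x|\gtrsim t^{\beta/\alpha}$, where both sides of \eqref{heat} hold for all $d$ and the absorption of $u^{\beta/\alpha}$ does close; but the shortcut via \eqref{heat-comp} that you suggest for $d<\alpha$ does not rescue the full statement, because that comparison controls $G$, not $\nabla G$. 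So your diagnosis of where the argument breaks is correct; what is missing is the recognition that no absorption argument can succeed without restricting $x$ (or weakening the right-hand side), because the inequality being proved is false as stated near the spatial origin.
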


\begin{proof}
The proofs of the first two parts follow from
\begin{equation*}
G(t,\,x)=\int_0^\infty p\left(\left(\frac{t}{u}\right)^\beta, x\right)g_\beta(u)\,\d u,
\end{equation*}
and \eqref{t-deri}, \eqref{s-deri} and the assymptotic properties of $g_\beta(u)$.
For the last part, we use \eqref{x-diff} to obtain
\begin{align*}
G(t,\,x+h)-G(t,\,x)&=\int_0^\infty \left[p\left(\left(\frac{t}{u}\right)^\beta, x+h\right)-p\left(\left(\frac{t}{u}\right)^\beta, x\right)\right]g_\beta(u)\,\d u\\
&\lesssim \frac{|h|^{\rho}}{t^{\rho\beta/\alpha}}\int_0^\infty u^{\rho\beta/\alpha}\left[p\left(\left(\frac{t}{u}\right)^\beta, \frac{x+h}{2}\right)+p\left(\left(\frac{t}{u}\right)^\beta, \frac{x}{2}\right)\right]g_\beta(u)\,\d u.
\end{align*}
Hence, we have
\begin{align*}
\int_{\R^d}|G(t,\,x+h)-G(t,\,x)|f(t,\,x)\,\d x&\lesssim\frac{|h|^{\rho}}{t^{\rho\beta/\alpha}}\int_0^\infty u^{\rho\beta/\alpha}g_\beta(u)\,\d u \\
&\lesssim \frac{|h|^{\rho}}{t^{\rho\beta/\alpha}}.
\end{align*}
That the integral appearing on the right hand side of above display is finite when $\rho<\alpha$ can be seen by looking at the behaviour of $g_\beta(u)$ as $u\rightarrow \infty.$
\end{proof}
Set  \begin{equation*}
\mathcal{G}f(t,\,x):=\int_{\R^d}G(t,\,x-y)f(y)\,\d y,
\end{equation*}
and
\begin{equation*}
\mathcal{A}f(t,\,x):=\int_0^t\int_{\R^d}G(t-s,\,x-y)f(s,y)^{1+\eta}\,\d y\,\d s.
\end{equation*}
We will need the following to argue that the solution is jointly continuous whenever it exists.
\begin{proposition}\label{cty}
\begin{itemize}
\item Suppose that $V_0$ is such that $\sup_{(0,\,T)\times \R^d} \mathcal{G}V_0(t,\,x)<\infty$ for some $T\leq \infty$, then $\mathcal{G}V_0(t,\,x)$ is jointly continuous on $(0,\,T)\times \R^d$.

\item Suppose that $\sup_{t\in(0,T], x\in\R^d}f(t,\,x)<\infty$ for some $T\leq \infty$. Then $\mathcal{A}f(t,\,x)$ is jointly continuous on $(0,\,T)\times \R^d$.
\end{itemize}
\end{proposition}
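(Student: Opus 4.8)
The plan is to treat the two parts separately. For the first part I will prove sequential joint continuity directly; for the second (easier) part I will prove continuity in $x$ uniformly for $t$ in compact subintervals of $(0,T)$ together with continuity in $t$ for each fixed $x$, and then obtain joint continuity at a point $(t_0,x_0)$ from $|\Phi(t,x)-\Phi(t_0,x_0)|\le|\Phi(t,x)-\Phi(t,x_0)|+|\Phi(t,x_0)-\Phi(t_0,x_0)|$. Two facts about $G$ will be used throughout: $G(t,\cdot)$ is a probability density, so $\int_{\R^d}G(t,z)\,\d z=1$ for every $t>0$; and $G$ is jointly continuous on $(0,\infty)\times(\R^d\setminus\{0\})$, which follows from the subordination identity \eqref{Eq:Green2}, the continuity of $p$, and dominated convergence, the domination being supplied by \eqref{heatalpha} together with the decay \eqref{Eq:gbeta0}, \eqref{Eq:gbetainf} of $g_\beta$.

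For the second part, put $F(s,y):=f(s,y)^{1+\eta}$, so $0\le F\le N^{1+\eta}$ with $N:=\sup_{(0,T]\times\R^d}f<\infty$; in particular $\mathcal{A}f(t,x)\le N^{1+\eta}\int_0^t\!\int_{\R^d}G(t-s,x-y)\,\d y\,\d s=N^{1+\eta}t<\infty$. For continuity in $x$, bound $|\mathcal{A}f(t,x+h)-\mathcal{A}f(t,x)|$ by $N^{1+\eta}\int_0^t\big(\int_{\R^d}|G(t-s,x+h-y)-G(t-s,x-y)|\,\d y\big)\,\d s$ and apply Proposition \ref{der-heat}(c) with the (constant, hence bounded) weight $f\equiv1$ and any fixed $\rho\in(0,\alpha)$; this bounds the inner integral by a constant multiple of $|h|^\rho(t-s)^{-\rho\beta/\alpha}$, and since $\rho\beta/\alpha<\beta<1$ the remaining $s$-integral converges, yielding $|\mathcal{A}f(t,x+h)-\mathcal{A}f(t,x)|\lesssim|h|^\rho t^{1-\rho\beta/\alpha}$, which is the required locally uniform spatial continuity. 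For continuity in $t$, say at $t'>t$, split $\mathcal{A}f(t',x)-\mathcal{A}f(t,x)$ into the contribution of $s\in(t,t')$, which is at most $N^{1+\eta}(t'-t)$, and the term $\int_0^t\!\int_{\R^d}[G(t'-s,x-y)-G(t-s,x-y)]F(s,y)\,\d y\,\d s$; writing $G(t'-s,z)-G(t-s,z)=\int_{t-s}^{t'-s}\partial_\tau G(\tau,z)\,\d\tau$, using $|\partial_\tau G(\tau,z)|\lesssim\tau^{-1}G(\tau,z)$ from \eqref{time-deri}, and integrating $z$ out via $\int_{\R^d}G(\tau,z)\,\d z=1$, the absolute value of this term is at most a constant times $\int_0^t\log\frac{t'-s}{t-s}\,\d s$, which an elementary calculation shows tends to $0$ as $t'\downarrow t$; the case $t'<t$ is symmetric.

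The first part requires more care, and it is here that I expect the main difficulty. The first step is to extract structural information on $V_0$ from the hypothesis $M:=\sup_{(0,T)\times\R^d}\mathcal{G}V_0<\infty$ using the lower bound in \eqref{heat} (which holds for all $d$): testing it at $(t,x)$ gives $\int_{|y-x|\le t^{\beta/\alpha}}V_0(y)\,\d y\le c_1^{-1}M\,t^{\beta d/\alpha}$ and $\int_{|y-x|> t^{\beta/\alpha}}|y-x|^{-d-\alpha}V_0(y)\,\d y\le c_1^{-1}M\,t^{-\beta}$, hence a \emph{volume-growth bound} $\int_{B(x,r)}V_0\lesssim r^d$ uniform in $x$ and in $r\in(0,T^{\beta/\alpha})$ (so in particular $V_0\in L^1_{\mathrm{loc}}$) and a \emph{tail bound} $\int_{|y-x_0|\ge1}|y-x_0|^{-d-\alpha}V_0(y)\,\d y<\infty$. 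Then, to prove $\mathcal{G}V_0(t_n,x_n)\to\mathcal{G}V_0(t_0,x_0)$ for $(t_n,x_n)\to(t_0,x_0)$ with $t_n$ in a compact $[a,b]\subset(0,T)$, split $\R^d$ into a small ball $B(x_0,\delta)$, an annulus $\{\delta\le|y-x_0|\le R\}$, and the tail $\{|y-x_0|>R\}$. On the annulus $G(t_n,x_n-y)$ stays in a compact region away from the origin, so it is bounded there and dominated convergence applies using $V_0\in L^1_{\mathrm{loc}}$; the tail is controlled uniformly in $n$ by the tail bound together with the estimate $G(t,z)\lesssim t^\beta|z|^{-d-\alpha}$, valid for $|z|\ge t^{\beta/\alpha}$ and hence for $|z|$ large; on $B(x_0,\delta)$ one uses the near-origin heat-kernel bounds recalled after \eqref{heat} (a constant if $\alpha>d$, a logarithm if $\alpha=d$, a power $|z|^{-(d-\alpha)}$ if $\alpha<d$) and combines each with the volume-growth bound through a layer-cake computation to show $\sup_n\int_{B(x_0,\delta)}G(t_n,x_n-y)V_0(y)\,\d y\to0$ as $\delta\to0$. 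Letting $n\to\infty$, then $\delta\to0$ and $R\to\infty$, finishes the argument; continuity in $t$ alone is the special case $x_n\equiv x_0$.

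The technical heart — and the step I expect to be the main obstacle — is producing a dominating function near $x_0$: $V_0$ is only assumed to make $\mathcal{G}V_0$ bounded, so it need be neither bounded nor globally integrable, while $G(t,\cdot)$ carries a genuine (though locally integrable) singularity at the origin whenever $\alpha\le d$. The point that makes the layer-cake estimate go through is precisely the volume-growth bound $\int_{B(x,r)}V_0\lesssim r^d$, which is exactly strong enough to absorb each of the three possible singularities of $G$ at the origin.
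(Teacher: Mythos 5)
Your proposal is correct, and it is worth separating the two bullets when comparing it with the paper. For the second bullet your argument is essentially the paper's: the paper decomposes $\mathcal{A}f(t+h,x+k)-\mathcal{A}f(t,x)$ into a time increment and a space increment and invokes Proposition \ref{der-heat}; you carry out exactly this plan, supplying the details the paper omits (the $L^1$ spatial modulus $|h|^\rho(t-s)^{-\rho\beta/\alpha}$ from part (c) with weight $1$, integrable in $s$ since $\rho\beta/\alpha<1$, and the $\int_0^t\log\frac{t'-s}{t-s}\,\d s\to0$ computation from part (a)). For the first bullet the paper offers nothing beyond "the proof uses Proposition \ref{der-heat}", and your route is genuinely different and, in my view, more robust. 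The quick route the paper presumably has in mind works cleanly for the time variable — $|\mathcal{G}V_0(t+h,x)-\mathcal{G}V_0(t,x)|\lesssim M\log\frac{t+h}{t}$ directly from \eqref{time-deri} and the hypothesis $\sup\mathcal{G}V_0\le M$ — and, when $\alpha>1$, for the space variable via \eqref{spatial-deri}, which gives $|\mathcal{G}V_0(t,x+k)-\mathcal{G}V_0(t,x)|\lesssim |k|\,t^{-\beta/\alpha}M$ in one line. But for $\alpha\le 1$ that gradient bound is unavailable, and Proposition \ref{der-heat}(c) (or \eqref{x-diff}) cannot be applied with $V_0$ in place of the bounded weight, since the hypothesis does not make $V_0$ bounded or globally integrable. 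Your replacement — extracting the uniform volume-growth bound $\int_{B(x,r)}V_0\lesssim r^d$ and the tail bound from the lower bound in \eqref{heat}, then running a ball/annulus/tail decomposition in which the layer-cake estimate absorbs the on-diagonal singularity of $G$ in each of the three regimes $d<\alpha$, $d=\alpha$, $d>\alpha$ — closes this gap and works for all $\alpha\in(0,2)$. What the paper's (implicit) approach buys is brevity when $\alpha>1$; what yours buys is a complete proof under the stated hypothesis with no hidden boundedness assumption on $V_0$.
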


\begin{proof}
The proof uses Proposition \ref{der-heat}. We merely indicate the how to start the proof of the more technical part. For $h>0$, $k\in \R^d$, we write
\begin{align*}
\mathcal{A}f(t+h,\,x+k)-\mathcal{A}f(t,\,x)&=\mathcal{A}f(t+h,\,x+k)-\mathcal{A}f(t,\,x+k)\\
&+\mathcal{A}f(t,\,x+k)-\mathcal{A}f(t,\,x)\\
&:=I+II.
\end{align*}
For the first part, we have
\begin{align*}
I&=\int_0^{t+h}\int_{\R^d}G(t+h-s,\,x+k-y)f(s,y)^{1+\eta}\,\d y\,\d s-\int_0^t\int_{\R^d}G(t-s,\,x+k-y)f(s,y)^{1+\eta}\,\d y\,\d s\\
&=\int_0^t\int_{\R^d}[G(t+h-s,\,x+k-y)-G(t-s,\,x+k-y)]f(s,y)^{1+\eta}\,\d y\,\d s\\
&+\int_t^{t+h}\int_{\R^d}G(t+h-s,\,x+k-y)f(s,y)^{1+\eta}\,\d y\,\d s.
\end{align*}
We can now use the above Proposition to bound each term. We deal with the second part in a similar fashion.

\end{proof}

\begin{lemma}\label{lemma:lower-bound-heq}
There exists a $T>0$, such that for all $t\geq T$,
\begin{align*}
\mathcal{G}V_0(t,\,x)\gtrsim \frac{1}{t^{\beta d/\alpha}}\quad\text{for all}\quad x\in B(0,\,t^{\beta/\alpha}).
\end{align*}
\end{lemma}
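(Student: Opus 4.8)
The plan is to integrate against the initial data and use the lower heat-kernel bounds together with the assumption $V_0\not\equiv 0$ to extract mass from a fixed region. First I would fix a ball $B(0,r)$ and a level $c>0$ such that the set $A:=\{y\in B(0,r): V_0(y)\geq c\}$ has positive Lebesgue measure; this is possible because $V_0$ is non-negative and not identically zero. Then for any $t>0$ and $x\in B(0,t^{\beta/\alpha})$ one has
\begin{equation*}
\mathcal{G}V_0(t,\,x)=\int_{\R^d}G(t,\,x-y)V_0(y)\,\d y\geq c\int_{A}G(t,\,x-y)\,\d y.
\end{equation*}
For $y\in A\subset B(0,r)$ and $x\in B(0,t^{\beta/\alpha})$ we have $|x-y|\leq t^{\beta/\alpha}+r$, so once $t$ is large enough that $t^{\beta/\alpha}\geq r$ (say $t\geq T$), we get $|x-y|\leq 2t^{\beta/\alpha}$, i.e. $|x-y|^{d+\alpha}\leq 2^{d+\alpha}t^{\beta(d+\alpha)/\alpha}$ and hence $\frac{t^\beta}{|x-y|^{d+\alpha}}\geq 2^{-(d+\alpha)}t^{-\beta d/\alpha}$. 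Comparing with $t^{-\beta d/\alpha}$, this shows $t^{-\beta d/\alpha}\wedge \frac{t^\beta}{|x-y|^{d+\alpha}}\gtrsim t^{-\beta d/\alpha}$ uniformly over such $x,y$.

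Next I would invoke the lower bound in \eqref{heat}, namely $G(t,\,x-y)\gtrsim t^{-\beta d/\alpha}\wedge \frac{t^\beta}{|x-y|^{d+\alpha}}$, which holds for all $t>0$, $x,y\in\R^d$ (only the upper bound in \eqref{heat} required $\alpha>d$). Combining the two previous displays gives, for $t\geq T$ and $x\in B(0,t^{\beta/\alpha})$,
\begin{equation*}
\mathcal{G}V_0(t,\,x)\geq c\int_A G(t,\,x-y)\,\d y\gtrsim c\,|A|\,t^{-\beta d/\alpha}\gtrsim t^{-\beta d/\alpha},
\end{equation*}
where the implied constant depends only on $c$, $|A|$, $d$, $\alpha$, $\beta$ — but crucially not on $x$. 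This is exactly the claimed bound.

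The only genuinely delicate point is ensuring the lower heat-kernel estimate \eqref{heat} is available with a constant uniform in $x$ and $t$ in the regime we use it; the statement as quoted gives this for all $t>0$ and all $x$, so no extra work is needed there, and the remark after \eqref{heat} confirms the bounds persist for $|x|\geq t^{\beta/\alpha}$ even when $d\geq\alpha$. One should also be slightly careful that $T$ depends on $V_0$ only through $r$ (the radius of a ball capturing a positive-measure superlevel set), which is fine. Everything else is a routine measure-theoretic selection of $A$ and the elementary inequality $|x-y|\leq 2t^{\beta/\alpha}$, so I do not expect any real obstacle.
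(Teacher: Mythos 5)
Your proof is correct and follows essentially the same route as the paper: restrict the spatial integral to a bounded region where $|x-y|\lesssim t^{\beta/\alpha}$, apply the lower bound in \eqref{heat} to get $G(t,\,x-y)\gtrsim t^{-\beta d/\alpha}$ there, and bound the remaining integral of $V_0$ below by a positive constant for $t$ large. Your explicit selection of the positive-measure superlevel set $A$ is just a slightly more careful version of the paper's step of choosing $t$ large enough that $\int_{B(0,\,t^{\beta/\alpha})}V_0(y)\,\d y$ is bounded below.
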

\begin{proof}
Let $x\in B(0,\,t^{\beta/\alpha})$. We now use the lower bound on the heat kernel to write
\begin{align*}
\mathcal{G}V_0(t,\,x)&= \int_{\R^d}G(t,\,x-y)V_0(y)\,\d y\\
&\geq \int_{B(0,\,t^{\beta/\alpha})}G(t,\,x-y)V_0(y)\,\d y\\
&\gtrsim \frac{1}{t^{\beta d/\alpha}} \int_{B(0,\,t^{\beta/\alpha})}V_0(y)\,\d y.
\end{align*}
By choosing $t$ large enough, we obtain the desired inequality.
\end{proof}

\section{Proof of Theorem \ref{thm-additional-drift}}\label{proof2}

\begin{proposition}\label{apriori-1}
Suppose that $\eta\leq\frac{\alpha}{\beta d}$. Let $M>0$, then there exists a $T_0>0$ such that for $t\geq T_0$,
\begin{equation*}
\inf_{x\in B(0,\,t^{\beta/\alpha})}V(t,\,x)\geq M.
\end{equation*}
\end{proposition}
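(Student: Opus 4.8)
The plan is to bootstrap from the lower bound on the linear part provided by Lemma~\ref{lemma:lower-bound-heq} and iterate the nonlinear Duhamel term $\mathcal{A}$ to push the lower bound for $\inf_{x\in B(0,t^{\beta/\alpha})}V(t,x)$ up indefinitely. Concretely, from \eqref{mild} we have $V(t,x)\ge \mathcal{G}V_0(t,x)$, so by Lemma~\ref{lemma:lower-bound-heq} there are constants $c_0>0$ and $T>0$ with $V(t,x)\ge c_0 t^{-\beta d/\alpha}$ for all $t\ge T$ and $x\in B(0,t^{\beta/\alpha})$. This will serve as the base case of an induction in which I track a family of lower bounds of the form $V(t,x)\ge a_n t^{-b_n}$ on the space-time region $\{t\ge T_n,\ x\in B(0,t^{\beta/\alpha})\}$, where the key feature of the critical/subcritical regime $\eta\le \alpha/(\beta d)$ is that the exponent $b_n$ does \emph{not} increase (or increases slowly enough that it stays controlled) while the constant $a_n$ can be made to grow without bound.

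The inductive step is the heart of the argument. Given $V(s,y)\ge a_n s^{-b_n}$ on the relevant region, I would estimate, for $t$ large and $x\in B(0,t^{\beta/\alpha})$,
\begin{equation*}
V(t,x)\ge \mathcal{A}V(t,x)=\int_0^t\int_{\R^d}G(t-s,x-y)V(s,y)^{1+\eta}\,\d y\,\d s
\ge \int_{t/2}^{t}\int_{B(0,\,s^{\beta/\alpha})}G(t-s,x-y)\,a_n^{1+\eta}s^{-b_n(1+\eta)}\,\d y\,\d s .
\end{equation*}
On the inner integral one restricts $y$ further so that $x-y$ stays in a ball of radius comparable to $t^{\beta/\alpha}$, on which the lower bound in \eqref{heat} gives $G(t-s,x-y)\gtrsim (t-s)^{-\beta d/\alpha}$ as long as $|x-y|\le (t-s)^{\beta/\alpha}$; the $y$-integral then contributes a volume factor $\asymp (t-s)^{\beta d/\alpha}$, and the two powers of $(t-s)$ cancel, leaving $\int_{t/2}^t s^{-b_n(1+\eta)}\,\d s \asymp t^{1-b_n(1+\eta)}$ (treating the boundary case $b_n(1+\eta)=1$ with a logarithm, which only helps). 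Hence $V(t,x)\gtrsim a_n^{1+\eta} t^{1-b_n(1+\eta)}$ on $x\in B(0,t^{\beta/\alpha})$ for $t\ge T_{n+1}$, i.e. $b_{n+1}=b_n(1+\eta)-1$ and $a_{n+1}=C a_n^{1+\eta}$ for a fixed constant $C>0$. The arithmetic to check is that when $b_0=\beta d/\alpha$ and $\eta\le\alpha/(\beta d)$ one has $b_0(1+\eta)-1\le b_0$, so the exponents $b_n$ stay bounded above by $b_0$ (in fact the fixed point of $b\mapsto b(1+\eta)-1$ is $1/\eta\ge \beta d/\alpha=b_0$, and the map is increasing, so $b_n$ is nonincreasing and bounded); meanwhile $a_{n+1}=Ca_n^{1+\eta}$ forces $a_n\to\infty$ provided the initial $a_0$ is taken large enough, which one arranges by first waiting until $t\ge T$ is large — or, more robustly, by noting that even if $a_0$ is small, after finitely many steps $a_n$ exceeds $C^{-1/\eta}$ and thereafter diverges. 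Combining "$b_n$ bounded" with "$a_n\to\infty$" gives: for any $M>0$ there is an $n$ and a $T_0=T_n$ with $V(t,x)\ge a_n t^{-b_n}\ge a_n t^{-b_0}\ge M$ once $t$ is large, but we actually want $M$ uniformly in $t\ge T_0$; this is handled by doing one extra step where the surviving power of $t$ is nonpositive, or simply by observing that with $b_n$ bounded and $a_n$ huge, on any fixed large $t$-interval the bound exceeds $M$, and then re-running the argument from that interval — this last packaging is a minor point.

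The main obstacle, and the reason the authors emphasize that the semigroup property fails, is making the inner convolution estimate rigorous without a Chapman--Kolmogorov identity: one cannot simply write $\mathcal{G}(\mathcal{G}V_0)=\mathcal{G}V_0$-type compositions. The workaround is exactly what is sketched above — use the \emph{pointwise} two-sided bound \eqref{heat} on $G$ directly, restricting the $s$-integral to $[t/2,t]$ and the $y$-integral to the region where both $V(s,y)$ enjoys its inductive lower bound ($y\in B(0,s^{\beta/\alpha})$, which contains a ball of radius $\asymp t^{\beta/\alpha}$ since $s\ge t/2$) and $G(t-s,x-y)$ enjoys its on-diagonal lower bound ($|x-y|\lesssim (t-s)^{\beta/\alpha}$). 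Checking that these two constraints are simultaneously satisfiable for a set of $(s,y)$ of the right size — i.e. that $\{y: |y|\le s^{\beta/\alpha}\}\cap\{y:|x-y|\le(t-s)^{\beta/\alpha}\}$ has volume $\asymp (t-s)^{\beta d/\alpha}$ uniformly for $x\in B(0,t^{\beta/\alpha})$ and $s\in[t/2,t-t/4]$, say — is the one genuinely delicate geometric verification; everything else is bookkeeping on the recursions for $a_n$ and $b_n$. I would also need the upper bound in \eqref{heat} nowhere here, only the lower bound, which holds in all dimensions, so no dimensional restriction is needed for this proposition.
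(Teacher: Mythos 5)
Your overall strategy --- seed the bound $V(t,x)\gtrsim t^{-\beta d/\alpha}$ on the parabolic ball from the linear term via Lemma~\ref{lemma:lower-bound-heq}, feed it into the Duhamel term using only the pointwise lower bound in \eqref{heat} (thereby bypassing the missing semigroup property), and iterate --- is exactly the paper's. But your implementation has a gap that is fatal at the critical exponent, which is included in the hypothesis $\eta\leq\alpha/(\beta d)$. By restricting the time integral to $s\in[t/2,3t/4]$ you only ever see the recent history, and your own recursion exposes the problem: when $\eta=\alpha/(\beta d)$ one has $b_0=\beta d/\alpha=1/\eta$, which is the fixed point of $b\mapsto b(1+\eta)-1$, so $b_n\equiv b_0$ and the iteration degenerates to $a_{n+1}=Ca_n^{1+\eta}$ with no gain in the power of $t$. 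Your parenthetical that the boundary case is ``treated with a logarithm, which only helps'' is false for your window: $\int_{t/2}^{3t/4}s^{-1}\,\d s=\log(3/2)$ is a constant, not $\log t$. The logarithm that drives the critical case comes precisely from integrating over the full history: the paper keeps $s\in(0,t/2]$, derives $F(t)\gtrsim 1+\int_1^{t/2}F(s)^{1+\eta}s^{-1}\,\d s$ for $F(t)=t^{\beta d/\alpha}\inf_{B(0,t^{\beta/\alpha})}V(t,\cdot)$, and it is the divergence of this integral that forces $F\to\infty$.

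Two further points. First, your multiplicative recursion is mishandled: if $a_0<C^{-1/\eta}$ then $a_{n+1}/a_n=Ca_n^{\eta}<1$, so $a_n$ \emph{decreases} doubly exponentially --- the opposite of your claim that ``after finitely many steps $a_n$ exceeds $C^{-1/\eta}$'' --- and you cannot make $a_0$ large by waiting, since $a_0$ is controlled by $\int V_0$, which is fixed. In the strictly subcritical case the scheme is rescuable because $-b_n$ also grows at rate $(1+\eta)^n$, so $a_nt^{-b_n}\to\infty$ for $t$ beyond one fixed threshold, but you have not done this check; the paper avoids the issue entirely by retaining the additive contribution of the linear term at every step, working with $F(t)\gtrsim 1+\int_0^{t/2}F(s)^{1+\eta}s^{-\eta\beta d/\alpha}\,\d s$ and using $F^{1+\eta}\geq F\geq c$, so the constants never degenerate. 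Second, your restriction $|x-y|\leq(t-s)^{\beta/\alpha}$ is both unnecessary and genuinely problematic: for $x$ near the boundary of $B(0,t^{\beta/\alpha})$ and $\beta/\alpha$ not small, the set $B(0,s^{\beta/\alpha})\cap B(x,(t-s)^{\beta/\alpha})$ can be empty. Since $t-s\asymp t$ on your window, the full lower bound in \eqref{heat} already gives $G(t-s,x-y)\gtrsim t^{-\beta d/\alpha}$ for \emph{every} $y\in B(0,s^{\beta/\alpha})$ (because $|x-y|\leq 2t^{\beta/\alpha}$), so one should integrate over the whole ball and the ``delicate geometric verification'' disappears --- this is what the paper does.
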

\begin{proof}
We begin with the integral solution,
\begin{align*}
V(t, x)&=
\mathcal{G}V_0(t,\,x)+ \int_{\R^d}\int_0^t G(t-s,\,x-y)V(s, y)^{1+\eta}\d s\,\d y.
\end{align*}
We look at the second term first. For $x\in B(0,\,t^{\beta/\alpha})$, we have
\begin{align*}
\int_{\R^d}\int_0^t &G(t-s,\,x-y)V(s, y)^{1+\eta}\d s\,\d y\\
&\geq \int_0^t\inf_{y\in B(0,\,s^{\beta/\alpha})}V(s,\,y)^{1+\eta}\int_{B(0,\,s^{\beta/\alpha})}G(t-s,\,x-y)\d y\,\d s\\
&\geq \int_0^{t/2}\inf_{y\in B(0,\,s^{\beta/\alpha})}V(s,\,y)^{1+\eta}\int_{B(0,\,s^{\beta/\alpha})}G(t-s,\,x-y)\d y\,\d s\\
&\gtrsim \int_0^{t/2}\inf_{y\in B(0,\,s^{\beta/\alpha})}V(s,\,y)^{1+\eta}\frac{s^{\beta d/\alpha}}{t^{\beta d/\alpha}}\,\d s,
\end{align*}
where we have used the lower bounds given by \eqref{heat}. For the first term we use Lemma \ref{lemma:lower-bound-heq} to write
\begin{align*}
\inf_{x\in B(0,\,t^{\beta/\alpha})}\mathcal{G}V_0(t,\,x)\gtrsim \frac{1}{t^{\beta d/\alpha}},
\end{align*}
whenever $t$ is large enough. Combining these estimates, we obtain
\begin{align*}
\inf_{x\in B(0,\,t^{\beta/\alpha})}V(t,\,x)\gtrsim  \frac{1}{t^{\beta d/\alpha}}+\int_0^{t/2}\inf_{y\in B(0,\,s^{\beta/\alpha})}V(s,\,y)^{1+\eta}\frac{s^{\beta d/\alpha}}{t^{\beta d/\alpha}}\,\d s.
\end{align*}
Set
\begin{align*}
F(t):=\inf_{x\in B(0,\,t^{\beta/\alpha})}t^{\beta d/\alpha}V(t,\,x),
\end{align*}
If $\eta<\frac{\alpha}{\beta d}$, the above inequality reduces to
\begin{align*}
F(t)\gtrsim 1+\int_0^{t/2}\frac{F(s)^{1+\eta}}{s^{\eta \beta d/\alpha}}\,\d s.
\end{align*}
Some computations imply that for any given fixed integer $N>0$, there are strictly positive constants $c_N$ and $\tilde{c}_N$ such that 
\begin{align*}
F(t)\gtrsim \tilde{c}_Nt^{c_N}.
\end{align*}
By taking $t$ large enough, we obtain more than what we need. When $\eta=\frac{\alpha}{\beta d}$, we obtain
\begin{align*}
F(t)\gtrsim 1+\int_1^{t/2}\frac{F(s)^{1+\eta}}{s}\,\d s,
\end{align*}
which again gives us what we need.
\end{proof}
A consequence of the above is the following.

\begin{proposition}\label{apriori-2}
Let $\eta\leq\frac{\alpha}{\beta d}$, then for $T$ large enough
\begin{align*}
\int_0^T\int_{\R^d}V(s,\,y)^{1+\eta}G(T+t-s,\,x-y)\,\d s\,\d y\gtrsim T,
\end{align*}
whenever $0<t<\frac{T}{3}$ and $x\in B(0,\,T^{\beta/\alpha})$.
\end{proposition}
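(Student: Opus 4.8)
The plan is to exploit Proposition~\ref{apriori-1}, which gives a lower bound on $V(s,y)$ on the space-time region $y\in B(0,s^{\beta/\alpha})$ for $s\geq T_0$, and combine it with the lower bound on the heat kernel from \eqref{heat}. First I would restrict the double integral on the left-hand side to the region $s\in[T_0,T]$ (discarding the nonnegative contribution from $s\in[0,T_0]$) and to $y\in B(0,s^{\beta/\alpha})$, so that Proposition~\ref{apriori-1} applies and replaces $V(s,y)^{1+\eta}$ by a constant $M^{1+\eta}$ (or, more sharply, by the polynomial/power lower bound $F(s)^{1+\eta}s^{-(1+\eta)\beta d/\alpha}$ already obtained in that proof, though the crude constant bound should already suffice here). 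This reduces the task to estimating $\int_{T_0}^T \int_{B(0,s^{\beta/\alpha})} G(T+t-s,x-y)\,\d y\,\d s$ from below.

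Next I would estimate the inner spatial integral. For $x\in B(0,T^{\beta/\alpha})$, $0<t<T/3$, and $s$ in a suitable subrange (say $s\in[T/2,T]$, so that both $T+t-s$ and $s$ are comparable to $T$), one has $|x-y|\lesssim T^{\beta/\alpha}$ for $y\in B(0,s^{\beta/\alpha})$, while $T+t-s\asymp T$; hence $|x-y|^{d+\alpha}\lesssim T^{\beta(d+\alpha)/\alpha}$ and $(T+t-s)^\beta/|x-y|^{d+\alpha}\gtrsim T^\beta T^{-\beta(d+\alpha)/\alpha}$, and comparing with $(T+t-s)^{-\beta d/\alpha}\asymp T^{-\beta d/\alpha}$ one checks the minimum in \eqref{heat} is of order $T^{-\beta d/\alpha}$. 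Therefore $G(T+t-s,x-y)\gtrsim (T+t-s)^{-\beta d/\alpha}\asymp T^{-\beta d/\alpha}$ uniformly over this range, and integrating over $y\in B(0,s^{\beta/\alpha})$, whose volume is $\asymp s^{\beta d/\alpha}\asymp T^{\beta d/\alpha}$, gives an inner integral bounded below by a constant independent of $T$.

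Finally I would integrate in $s$ over $[T/2,T]$, picking up a factor of $T/2$, which yields the desired lower bound $\gtrsim T$ after absorbing the constants $M^{1+\eta}$, the volume constants, and the heat-kernel constants into the implied constant (all of which are independent of $T$, $t$, $x$). One must take $T$ large enough that $T/2\geq T_0$ so that Proposition~\ref{apriori-1} is in force on the whole range $[T/2,T]$; this is the role of ``$T$ large enough'' in the statement.

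The main obstacle is the bookkeeping needed to verify that the heat-kernel lower bound in \eqref{heat} genuinely contributes the $t$-independent constant $T^{-\beta d/\alpha}$ uniformly over $s\in[T/2,T]$, $x\in B(0,T^{\beta/\alpha})$, $y\in B(0,s^{\beta/\alpha})$, $0<t<T/3$ — i.e., confirming which branch of the minimum $\,t^{-\beta d/\alpha}\wedge t^\beta/|x|^{d+\alpha}\,$ is active. Once the triangle inequality bound $|x-y|\lesssim T^{\beta/\alpha}$ and the comparability $T+t-s\asymp T$ are in hand, both branches are of the same order $T^{-\beta d/\alpha}$, so the estimate is robust; the choice of the subrange $s\in[T/2,T]$ is precisely what makes $T+t-s$ comparable to $T$ rather than degenerating. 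No semigroup property is needed, consistent with the paper's stated strategy.
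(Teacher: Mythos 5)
Your strategy is the same as the paper's: restrict to a subinterval of $s$ where Proposition~\ref{apriori-1} gives $V\geq M$, observe that $|x-y|\lesssim (T+t-s)^{\beta/\alpha}$ there so that both branches of the minimum in \eqref{heat} are of order $(T+t-s)^{-\beta d/\alpha}$, conclude that the inner spatial integral is $\gtrsim 1$, and integrate in $s$ over an interval of length $\asymp T$. The paper uses the window $s\in[(T+t)/2,\,3(T+t)/4]$ and restricts the inner integral to $B(0,(T+t-s)^{\beta/\alpha})\subset B(0,s^{\beta/\alpha})$, but this is the same computation.

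There is, however, one concrete slip: on your window $s\in[T/2,T]$ the claim ``$T+t-s\asymp T$'' fails near the right endpoint, since $T+t-s$ ranges down to $t$, which may be arbitrarily small compared with $T$ (only $0<t<T/3$ is assumed). When $T+t-s$ is small and $x$ sits near the boundary of $B(0,T^{\beta/\alpha})$ (hence at distance of order $T^{\beta/\alpha}$ from most of $B(0,s^{\beta/\alpha})$), the active branch of \eqref{heat} is the tail branch $(T+t-s)^{\beta}/|x-y|^{d+\alpha}$, and the inner integral degenerates like $(T+t-s)^{\beta}T^{-\beta}\to 0$; it is not uniformly $\gtrsim 1$ over all of $[T/2,T]$. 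The repair is immediate: truncate the $s$-integration at, say, $3T/4$ (or use the paper's window $[(T+t)/2,3(T+t)/4]$), so that $T+t-s\geq T/4$ throughout; the surviving interval still has length $\asymp T$, and the rest of your argument goes through unchanged.
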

\begin{proof}
We use the previous proposition to write
\begin{align*}
\int_0^T\int_{\R^d}V(s,\,y)^{1+\eta}&G(T+t-s,\,x-y)\,\d y\,\d s\\
&\geq \int_{(T+t)/2}^{3(T+t)/4}\int_{B(0,\,s^{\beta/\alpha})}V(s,\,y)^{1+\eta}G(T+t-s,\,x-y)\,\d y\,\d s\\
&\geq M^{1+\eta} \int_{(T+t)/2}^{3(T+t)/4}\int_{B(0,\,s^{\beta/\alpha})}G(T+t-s,\,x-y)\,\d y\,\d s.
\end{align*}
Since $t<\frac{T}{3}$, we have $B(0,\,(T+t-s)^{\beta/\alpha})\subset B(0,\,s^{\beta/\alpha})$ and $|x-y|\leq c_1(T+t-s)^{\beta/\alpha}$. We therefore have
\begin{align*}
\int_{B(0,\,s^{\beta/\alpha})}&G(T+t-s,\,x-y)\,\d y\\
&\geq \int_{B(0,\,(T+t-s)^{\beta/\alpha})}G(T+t-s,\,x-y)\,\d y\\
&\gtrsim 1,
\end{align*}
where we have used the lower bound given by \eqref{heat} to obtain the last inequality.  We combine these estimates above to obtain the result.
\end{proof}


We are now ready to prove  Theorem \ref{thm-additional-drift}.
\begin{proof}[Proof of Theorem \ref{thm-additional-drift}]
Let $T>0$ which we are going to fix later. From the intergral solution, we have
\begin{equation*}
V(t+T, x)=
\int_{\R^d} G(t+T,\,x-y)V_0(y)\,\d y + \int_{\R^d}\int_0^{t+T} G(t+T-s,\,x-y)V(s, y)^{1+\eta}\\d s\,\d y.
\end{equation*}
A simple change of variables and the fact that the first term of the above display is non-negative, we obtain
\begin{align*}
V(t+T, x)&\geq \int_{\R^d}\int_0^{T} G(t+T-s,\,x-y)V(s, y)^{1+\eta}\d s\,\d y\\
&+\int_{\R^d}\int_0^{t} G(t-s,\,x-y)V(s+T, y)^{1+\eta}\d s\,\d y.
\end{align*}
We bound the first term of the above display.  From the above proposition, for $x\in B(0,\,1)$, we have upon taking $T$ large enough,
\begin{align*}
\int_{\R^d}\int_0^{T} G(t+T-s,\,x-y)V(s, y)^{1+\eta}\d s\,\d y\gtrsim T.
\end{align*}
We now look at the second term. We take $t\leq \left(\frac{1}{2}\right)^{\alpha/\beta}$.
\begin{align*}
\int_{\R^d}\int_0^{t} &G(t-s,\,x-y)V(s+T, y)^{1+\eta}\d s\,\d y\\
&\geq \int_0^{t} \inf_{y\in B(0,\,1)}V(s+T,\,y)^{1+\eta}\int_{B(0,1)}G(t-s,\,x-y)\d y\,\d s.
\end{align*}
Since $x\in B(0,\,1)$ we can take $t\leq \left(\frac{1}{2}\right)^{\alpha/\beta}$ and use the heat kernel estimates to obtain
\begin{align*}
\int_{B(0,1)}&G(t-s,\,x-y)\d y\\
&\geq \int_{B(0,(t-s)^{\beta/\alpha})\cap B(0,\,1)}G(t-s,\,x-y)\d y\\
&\gtrsim 1.
\end{align*}
Putting these estimates together yield
\begin{align*}
\inf_{x\in B(0,\,1)}V(t+T,\,x)\gtrsim T+\int_0^t \inf_{x\in B(0,\,1)}V(s+T,\,x)^{1+\eta}\,\d s.
\end{align*}
Fix $T$ large enough so that $\inf_{x\in B(0,\,1)}V(t+T,\,x)=\infty$ for all $t\in [t_0, \left(\frac{1}{2}\right)^{\alpha/\beta}]$.  We now use the integral solution again to conclude that there exists a $T_0>0$ such that for all $t\geq T_0$, $V(t,\,x)=\infty$. 
\end{proof}

\section{Proof of Theorem \ref{thm-norm-existence}}\label{proof3}
The proof of the following result is a straightforward application of Young's convolution inequality.
\begin{lemma}\label{young}
For all $t>0$, we have
\begin{enumerate}
\item[(a)]\begin{equation*}
\|\mathcal{G}V_0(t,\,\cdot) \|_{L^r(\R^d)}\lesssim t^{-\frac{\beta d}{\alpha}(\frac{1}{p}-\frac{1}{r})}\|V_0\|_{L^p(\R^d)}
\end{equation*}
with $p,\,r\in [1,\,\infty]$ satisfying $0\leq \frac{1}{p}-\frac{1}{r}<\frac{\alpha}{d}$
\item[(b)] For $0\leq s\leq t$, we have
\begin{equation}\label{B1}
\| \int_{\R^d}G(t-s,\,\cdot-y)f(s,y)^{1+\eta}\,\d y\|_{L^r(\R^d)} \lesssim (t-s)^{-\frac{\beta d}{\alpha}(\frac{1+\eta}{p}-\frac{1}{r})}\|f(s,\,\cdot)\|_{L^p(\R^d)}^{1+\eta}
\end{equation}
with $\frac{p}{1+\eta}, r\in [1,\,\infty]$ satisfying $0\leq\frac{1+\eta}{p}-\frac{1}{r}<\frac{\alpha}{d}.$
\end{enumerate}
\end{lemma}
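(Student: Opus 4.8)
\emph{Proof proposal.} The plan is to treat both parts as a single application of Young's convolution inequality combined with the self-similarity of $G$ recorded in \eqref{time-kernel-scaling}. I would first recall the form of Young's inequality I need: for $p,q,r\in[1,\infty]$ with $\frac1q+\frac1p=1+\frac1r$ one has $\|f*g\|_{L^r(\R^d)}\le\|f\|_{L^q(\R^d)}\|g\|_{L^p(\R^d)}$.

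For part (a) I would observe that $\mathcal{G}V_0(t,\cdot)=G(t,\cdot)*V_0$ and choose the third exponent $q$ via $\frac1q:=1-\frac1p+\frac1r$; the hypothesis $\frac1p-\frac1r\ge0$, together with the automatic bound $\frac1p-\frac1r\le1$, is exactly what makes $q\in[1,\infty]$. Young's inequality then reduces the estimate to controlling $\|G(t,\cdot)\|_{L^q}$, and here I would invoke \eqref{time-kernel-scaling}: a change of variables gives $\|G(t,\cdot)\|_{L^q}=t^{-\frac{\beta d}{\alpha}(1-1/q)}\|G(1,\cdot)\|_{L^q}$, and since $1-\frac1q=\frac1p-\frac1r$ this produces precisely the advertised power of $t$. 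Part (b) follows the same template once one writes the inner integral as $G(t-s,\cdot)*\bigl(f(s,\cdot)^{1+\eta}\bigr)$ and uses $\|f(s,\cdot)^{1+\eta}\|_{L^{p/(1+\eta)}}=\|f(s,\cdot)\|_{L^p}^{1+\eta}$; now $q$ is defined by $\frac1q:=1-\frac{1+\eta}{p}+\frac1r$, which lies in $[1,\infty]$ because $\frac{p}{1+\eta}\ge1$ and $\frac{1+\eta}{p}-\frac1r\ge0$, and $1-\frac1q=\frac{1+\eta}{p}-\frac1r$ again matches the exponent in \eqref{B1}.

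The only real content --- and the step I expect to be the one mild obstacle --- is verifying that $\|G(1,\cdot)\|_{L^q}<\infty$ for the relevant $q$, namely $q$ with $1-\frac1q<\frac\alpha d$. I would split $\R^d$ into $\{|x|\le1\}$ and $\{|x|\ge1\}$. On the far region, the upper estimate in \eqref{heat} (which, as recalled just after that display, holds for every $d$ when $|x|\ge t^{\beta/\alpha}$) gives $G(1,x)\lesssim|x|^{-(d+\alpha)}$, which lies in $L^q$ at infinity for every $q\ge1$. On the near region I would use the three-regime description stated after \eqref{heat-comp}: $G(1,\cdot)$ is bounded when $\alpha>d$, comparable to $\log(2/|x|)$ when $\alpha=d$ (hence in every local $L^q$), and comparable to $|x|^{-(d-\alpha)}$ when $\alpha<d$; in the last case $\int_{|x|\le1}|x|^{-q(d-\alpha)}\,\d x<\infty$ exactly when $q(d-\alpha)<d$, i.e. $1-\frac1q<\frac\alpha d$. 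Thus the strict conditions $\frac1p-\frac1r<\frac\alpha d$ in (a) and $\frac{1+\eta}{p}-\frac1r<\frac\alpha d$ in (b) are precisely what guarantees that the needed $L^q$ norm of the profile $G(1,\cdot)$ is finite, and once this is in hand no further ingredient is required.
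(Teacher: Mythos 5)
Your proposal is correct and follows essentially the same route as the paper: Young's convolution inequality with $1+\frac1r=\frac1p+\frac1q$ (resp. $1+\frac1r=\frac{1+\eta}{p}+\frac1q$), plus the scaling identity \eqref{time-kernel-scaling} and the heat kernel estimates to get $\|G(t,\cdot)\|_{L^q}\lesssim t^{-\frac{\beta d}{\alpha}(1-\frac1q)}$ for $1-\frac1q<\frac\alpha d$. Your near/far splitting of $\R^d$ to verify $\|G(1,\cdot)\|_{L^q}<\infty$ simply fills in the detail the paper leaves implicit.
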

\begin{proof}
Young's convolution inequality gives us
\begin{equation*}
\|\mathcal{G}V_0(t,\,\cdot) \|_{L^r(\R^d)}\leq \|G(t,\,\cdot)\|_{L^q(\R^d)}\|V_0\|_{L^p(\R^d)},
\end{equation*}
for any $p, q, r\in [1,\infty]$ satisfying $1+\frac{1}{r}=\frac{1}{p}+\frac{1}{q}.$
The first part now follows by noting that from the scaling property and the heat kernel estimates,
\begin{align*}
\|G(t,\,\cdot)\|_{L^q(\R^d)}\lesssim t^{-\frac{\beta d}{\alpha}(1-\frac{1}{q})},
\end{align*}
whenever $1-\frac{1}{q}<\frac{\alpha}{d} $.
For the second inequality, we use Young's inequality again and the above but this time with parameters $\frac{p}{1+\eta}, q, r\in [1,\,\infty]$ satisfying $1+\frac{1}{r}=\frac{1+\eta}{p}+\frac{1}{q}$.
\end{proof}
For the next result, we will need the following notation.  
Set\begin{equation}\label{norm}
\|V\|_{p, \theta}:= \sup_{t>0}t^{\theta}\|V(t,\,\cdot)\|_{L^p(\R^d)}.
\end{equation}

\begin{corollary}\label{contract}
Suppose that $\eta>\frac{\alpha}{\beta d}$ and let $p> \frac{\beta d \eta}{\alpha}$.  Let $$\theta:=\frac{\beta d}{\alpha}\left(\frac{\alpha}{\beta d \eta}-\frac{1}{p}\right).$$ Then, we have
\begin{itemize}
\item[(a)] \begin{equation*}
\|\mathcal{G}f \|_{p, \theta}\lesssim \|f\|_{L^{q_c}(\R^d)},
\end{equation*}
where $q_c:=\frac{\beta d \eta}{\alpha}$ and $\theta/\beta<1$.
\item[(b)]\begin{equation*}
\| \mathcal{A}f\|_{p,\,\theta}\lesssim \|f\|^{1+\eta}_{p,\,\theta},
\end{equation*}
with $\frac{p}{1+\eta}\in [1,\,\infty]$ and $p>\frac{d\eta}{\alpha}$.
\item[(c)]Suppose that $f$ and $g$ satisfy $\|f\|_{p,\,\theta}<M$ and $\|g\|_{p,\,\theta}<M$ for some $M>0$. We then have
\begin{equation*}
\| \mathcal{A}f-\mathcal{A}g\|_{p,\,\theta}\lesssim M^\eta \|f-g\|_{p,\,\theta},
\end{equation*}
whenever $(1+\eta)\theta<1$, $\frac{p}{1+\eta}\in [1,\,\infty]$ and $p>\frac{d\eta}{\alpha}$.
\end{itemize}
\end{corollary}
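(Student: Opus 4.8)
The plan is to prove all three parts of Corollary~\ref{contract} by reducing them to the two inequalities of Lemma~\ref{young} together with the scaling built into the norm $\|\cdot\|_{p,\theta}$, the key point being a bookkeeping check that the time-exponents line up to integrable powers.

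For part (a), I would apply Lemma~\ref{young}(a) with the source exponent $p$ replaced by $q_c=\beta d\eta/\alpha$ and the target exponent $r=p$. This gives $\|\mathcal{G}f(t,\cdot)\|_{L^p}\lesssim t^{-\frac{\beta d}{\alpha}(\frac{1}{q_c}-\frac{1}{p})}\|f\|_{L^{q_c}}$, and one checks that $\frac{\beta d}{\alpha}(\frac{1}{q_c}-\frac{1}{p})=\frac{\beta d}{\alpha}(\frac{\alpha}{\beta d\eta}-\frac{1}{p})=\theta$, so multiplying by $t^\theta$ and taking the supremum over $t>0$ gives exactly $\|\mathcal{G}f\|_{p,\theta}\lesssim\|f\|_{L^{q_c}}$. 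One must also verify the admissibility condition $0\le\frac{1}{q_c}-\frac{1}{p}<\frac{\alpha}{d}$: since $p>q_c=\beta d\eta/\alpha$ the left inequality holds, and since $\frac{1}{q_c}=\frac{\alpha}{\beta d\eta}$ and $\eta>\alpha/(\beta d)$ one gets $\frac{1}{q_c}<1$, which after a short computation (using $\beta<1$, so $\frac{1}{q_c}<\frac{\alpha}{d}$ would need $\eta>\beta^{-1}$... more carefully one uses $\frac1{q_c}-\frac1p\le\frac1{q_c}=\frac{\alpha}{\beta d\eta}$) yields the bound; the claim $\theta/\beta<1$ follows since $\theta/\beta=\frac{d}{\alpha}(\frac{\alpha}{\beta d\eta}-\frac1p)<\frac{d}{\alpha}\cdot\frac{\alpha}{\beta d\eta}=\frac{1}{\beta\eta}$, and $\eta>\alpha/(\beta d)\ge 1/\beta$ when $d\le\alpha$... actually $\theta/\beta<1$ should come directly from $\frac1{q_c}-\frac1p<\alpha/d$, so I would derive it from the admissibility condition rather than separately.

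For part (b), I would write $\mathcal{A}f(t,x)=\int_0^t\big[\int_{\R^d}G(t-s,x-y)f(s,y)^{1+\eta}\,\d y\big]\,\d s$, take the $L^p$ norm in $x$ inside the $s$-integral via Minkowski's integral inequality, and apply Lemma~\ref{young}(b) with source exponent $p$ and target exponent $r=p$ to get the integrand bounded by $(t-s)^{-\frac{\beta d}{\alpha}(\frac{1+\eta}{p}-\frac1p)}\|f(s,\cdot)\|_{L^p}^{1+\eta}=(t-s)^{-\frac{\beta d\eta}{\alpha p}}\|f(s,\cdot)\|_{L^p}^{1+\eta}$. Now bound $\|f(s,\cdot)\|_{L^p}^{1+\eta}\le s^{-(1+\eta)\theta}\|f\|_{p,\theta}^{1+\eta}$, so that $\|\mathcal{A}f(t,\cdot)\|_{L^p}\lesssim\|f\|_{p,\theta}^{1+\eta}\int_0^t(t-s)^{-\frac{\beta d\eta}{\alpha p}}s^{-(1+\eta)\theta}\,\d s$. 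The substitution $s=t\sigma$ turns this into $t^{1-\frac{\beta d\eta}{\alpha p}-(1+\eta)\theta}$ times a Beta integral $\int_0^1(1-\sigma)^{-\frac{\beta d\eta}{\alpha p}}\sigma^{-(1+\eta)\theta}\,\d\sigma$; the arithmetic identity $1-\frac{\beta d\eta}{\alpha p}-(1+\eta)\theta=-\theta$ (which follows by substituting $\theta=\frac{\beta d}{\alpha}(\frac{\alpha}{\beta d\eta}-\frac1p)=\frac1\eta-\frac{\beta d}{\alpha p}$) gives the needed factor $t^{-\theta}$. Multiplying by $t^\theta$ and taking $\sup_{t>0}$ yields the claim, provided the Beta integral converges, i.e. $\frac{\beta d\eta}{\alpha p}<1$ (equivalent to $p>\beta d\eta/\alpha$, which holds since $p>q_c$) and $(1+\eta)\theta<1$ (note $(1+\eta)\theta=\theta+\eta\theta$ and $\eta\theta=1-\frac{\beta d\eta}{\alpha p}-\theta$, so $(1+\eta)\theta=1-\frac{\beta d\eta}{\alpha p}<1$ automatically). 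Part (c) is essentially identical: using $|a^{1+\eta}-b^{1+\eta}|\lesssim(|a|^\eta+|b|^\eta)|a-b|$ pointwise, then Hölder in $y$ (splitting the $L^{p}$ norm of the product into $L^{p/\eta}$ of $(|f|^\eta+|g|^\eta)$ against an appropriate factor, or more directly applying Lemma~\ref{young}(b)-style bounds to the difference), one bounds $\|\mathcal{A}f-\mathcal{A}g\|_{p,\theta}$ by $M^\eta\|f-g\|_{p,\theta}$ with the same time-exponent bookkeeping, the hypothesis $(1+\eta)\theta<1$ now being stated explicitly because in the difference estimate one genuinely needs both this and $p>d\eta/\alpha$ to run the convolution inequality.

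The main obstacle is not any single hard estimate but rather the exponent accounting: one must verify carefully that the definition of $\theta$ is exactly the one that makes the Beta-function time integral in parts (b) and (c) scale like $t^{-\theta}$ rather than some other power, and that all the convergence conditions ($\frac{\beta d\eta}{\alpha p}<1$, $(1+\eta)\theta<1$, and the Young admissibility range $0\le\frac{1+\eta}{p}-\frac1p<\frac\alpha d$) are simultaneously consistent with the standing hypotheses $\eta>\alpha/(\beta d)$ and $p>\beta d\eta/\alpha$. I would organize the proof so that the identity $\theta=\frac1\eta-\frac{\beta d}{\alpha p}$ is isolated up front, since it is used repeatedly; with that in hand each part is a one- or two-line application of Lemma~\ref{young} plus a scaling substitution in the time integral. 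A secondary technical point is justifying the use of Minkowski's integral inequality to pull the $L^p_x$ norm inside the $\d s$ integral, which is routine by nonnegativity of $G$ and $f$ (recall $V_0\ge0$, so the relevant $f$ are nonnegative).
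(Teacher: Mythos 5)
Your approach is the same as the paper's: each part is Lemma \ref{young} applied with the appropriate exponents, followed by the scaling computation $\int_0^t(t-s)^{-\beta d\eta/\alpha p}\,s^{-(1+\eta)\theta}\,\d s\simeq t^{\,1-\frac{\beta d\eta}{\alpha p}-(1+\eta)\theta}=t^{-\theta}$; your isolation of the identity $\theta=\frac1\eta-\frac{\beta d}{\alpha p}$ and the verification $1-\frac{\beta d\eta}{\alpha p}-(1+\eta)\theta=-\theta$ are both correct, and part (c) is handled exactly as in the paper (the pointwise bound $|a^{1+\eta}-b^{1+\eta}|\lesssim(a^\eta+b^\eta)|a-b|$, then H\"older inside Young with $1+\frac1p=\frac{1+\eta}{p}+\frac{p-\eta}{p}$).

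There is, however, one concrete arithmetic error in part (b). You assert $\eta\theta=1-\frac{\beta d\eta}{\alpha p}-\theta$ and conclude that $(1+\eta)\theta=1-\frac{\beta d\eta}{\alpha p}<1$ holds ``automatically.'' The correct identity is $\eta\theta=\eta\bigl(\frac1\eta-\frac{\beta d}{\alpha p}\bigr)=1-\frac{\beta d\eta}{\alpha p}$ (no extra $-\theta$), hence $(1+\eta)\theta=1-\frac{\beta d\eta}{\alpha p}+\theta$, which is $<1$ if and only if $\theta<\frac{\beta d\eta}{\alpha p}$, i.e.\ if and only if $p<(1+\eta)q_c$. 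So integrability at the endpoint $s=0$ of your Beta integral in part (b) is \emph{not} automatic from $p>q_c$; it is a genuine additional upper restriction on $p$ (the paper lists $(1+\eta)\theta<1$ only in part (c), but the same integral occurs in part (b), so the condition is needed there as well; it holds on the nonempty window $q_c<p<(1+\eta)q_c$, which is how the corollary is actually used). A smaller point: in part (a) you need not agonize over the admissibility condition --- $0\le\frac1{q_c}-\frac1p<\frac\alpha d$ is literally equivalent to $\theta\ge0$ together with $\theta/\beta<1$, so $\theta/\beta<1$ is precisely the stated hypothesis under which Lemma \ref{young}(a) applies, not something to be derived from $\eta>\alpha/(\beta d)$.
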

\begin{proof} The first part is a straightforward consequence of the first part of the above Lemma \ref{young}. For the second part, the same lemma gives us
\begin{align*}
\|\mathcal{A}f\|_{L^p(\R^d)}\lesssim t^{1-\frac{\beta d\eta}{\alpha p}}\| f\|^{1+\eta}_{L^p(\R^d)},
\end{align*}
from which we obtain the result after some computations. The final part is slightly more involved. For the second inequality below, we use Young's inequality with parameters $1+\frac{1}{p}=\frac{\eta+1}{p}+\frac{p-\eta}{p}$ along with the assumption that $p>\frac{d\eta}{\alpha}$,
\begin{align*}
 \|\mathcal{A}f(t,\,\cdot)&-\mathcal{A}g(t,\,\cdot)\|_{L^p(\R^d)}\\
 &=\left\|\int_0^t\int_{\R^d}G(t-s,\,x-y)[f(s,y)^{1+\eta}-g(s,y)^{1+\eta}]\,\d y\,\d s\right\|_{L^p(\R^d)}\\
 &\lesssim \left\|\int_0^t\int_{\R^d}G(t-s,\,x-y)|f(s,y)-g(s,y)||f(s,y)^\eta+g(s,y)^\eta|\,\d y\,\d s\right\|_{L^p(\R^d)}\\
 &\lesssim \int_0^t(t-s)^{-\beta\eta d/\alpha p}\| |f(s,\cdot)-g(s,\cdot)||f(s,\cdot)^\eta+g(s,\cdot)^\eta|\|_{L^{\frac{p}{1+\eta}}(\R^d)}\,\d s\\
&\lesssim \int_0^t(t-s)^{-\beta\eta d/\alpha p}\|f(s,\cdot)-g(s,\cdot)\|_{L^p(\R^d)}[\|f(s,\cdot)\|^\eta_{L^p(\R^d)}+\| g(s,\cdot)\|_{L^p(\R^d)}^\eta] \,\d s\\
&\lesssim M^\eta \|f-g\|_{p,\theta} \int_0^t(t-s)^{-\beta\eta d/\alpha p}s^{-(1+\eta)\theta}\,\d s.
\end{align*}
Since $(1+\eta)\theta<1$ and $p> \frac{\beta d \eta}{\alpha}$, the integral in the above makes sense. We now obtain the result after some computations.
\end{proof}

\begin{proposition}\label{Lp-existence}
Let $\eta>\alpha/\beta d$ and set $q_c=\frac{\beta d\eta}{\alpha}$. Then for $\|V_0\|_{L^{q_c}(\R^d)}$ small enough, then there is a unique solution to \eqref{mild} such that 
\begin{equation*}
\|V\|_{p,\,\theta}<\infty\quad \text{for some}\quad p>q_c,
\end{equation*}
where the norm $\|\cdot\|_{p,\,\theta}$ is defined by \eqref{norm} and $\theta$ is as in Corollary \ref{contract}.
\end{proposition}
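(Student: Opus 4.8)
The plan is to set up a fixed-point argument for the map $\Phi(V) := \mathcal{G}V_0 + \mathcal{A}V$ on the complete metric space $\{V : \|V\|_{p,\theta} \le M\}$ equipped with the metric induced by $\|\cdot\|_{p,\theta}$, for a suitably small radius $M$. The three parts of Corollary \ref{contract} are tailored exactly for this: part (a) controls the linear term $\mathcal{G}V_0$ by $\|V_0\|_{L^{q_c}(\R^d)}$, part (b) shows $\mathcal{A}$ maps the ball into $L^p$-functions with the right decay, and part (c) gives the Lipschitz estimate $\|\mathcal{A}f - \mathcal{A}g\|_{p,\theta} \lesssim M^\eta \|f-g\|_{p,\theta}$. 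Concretely, I first fix $p > q_c$ close enough to $q_c$ that all the constraints in Corollary \ref{contract} hold simultaneously — in particular $\theta/\beta < 1$, $(1+\eta)\theta < 1$, $\tfrac{p}{1+\eta} \in [1,\infty]$, and $p > \tfrac{d\eta}{\alpha}$; one should check these are compatible (when $p \downarrow q_c$ one has $\theta \downarrow 0$, so the strict inequalities $(1+\eta)\theta<1$ are satisfied for $p$ near $q_c$, and the remaining conditions are open conditions that I would verify are nonempty).

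Next, choose constants. Let $C_1, C_2, C_3$ be the implied constants in parts (a), (b), (c) respectively. Pick $M > 0$ small enough that $C_2 M^{1+\eta} \le M/2$ and $C_3 M^\eta \le 1/2$; then require $\|V_0\|_{L^{q_c}(\R^d)}$ small enough that $C_1 \|V_0\|_{L^{q_c}} \le M/2$. With these choices, for any $V$ in the ball we get $\|\Phi(V)\|_{p,\theta} \le C_1\|V_0\|_{L^{q_c}} + C_2 M^{1+\eta} \le M/2 + M/2 = M$, so $\Phi$ maps the ball into itself; and for $V, W$ in the ball, $\|\Phi(V) - \Phi(W)\|_{p,\theta} = \|\mathcal{A}V - \mathcal{A}W\|_{p,\theta} \le C_3 M^\eta \|V - W\|_{p,\theta} \le \tfrac12\|V-W\|_{p,\theta}$, so $\Phi$ is a contraction. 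The Banach fixed-point theorem then yields a unique $V$ with $\|V\|_{p,\theta} \le M$ solving $V = \Phi(V)$, which is precisely \eqref{mild}.

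A couple of points need care. First, uniqueness as stated is uniqueness within the class $\{\|V\|_{p,\theta} < \infty\}$ for that particular $p$, not just within the small ball; to upgrade, I would take two solutions $V, W$ with finite (but possibly large) $\|\cdot\|_{p,\theta}$ norms, restrict attention to a short time interval $(0, t_1)$ on which both have small norm — using that $t^\theta \|V(t,\cdot)\|_{L^p} \to 0$ as $t \to 0$ is not automatic, so instead I would run the contraction estimate with the $\sup$ taken over $(0,t_1)$ and absorb, getting $V = W$ on $(0,t_1)$, then bootstrap forward in time by a continuation argument. Second, one must make sure the integrals defining $\mathcal{G}V_0$ and $\mathcal{A}V$ actually converge for $V$ in the ball, which again follows from Lemma \ref{young} together with the integrability of $(t-s)^{-\beta\eta d/\alpha p} s^{-(1+\eta)\theta}$ over $(0,t)$ guaranteed by the constraints on $p$ and $\theta$. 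The main obstacle I anticipate is purely bookkeeping: verifying that the set of admissible $p$ is nonempty, i.e. that the several inequalities on $p$ and $\theta = \tfrac{\beta d}{\alpha}\big(\tfrac{\alpha}{\beta d\eta} - \tfrac1p\big)$ coming from the three parts of Corollary \ref{contract} can all be met at once given only $\eta > \alpha/\beta d$. Once that compatibility check is done, the fixed-point argument is routine.
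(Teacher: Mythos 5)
Your proposal is correct and follows essentially the same route as the paper: the paper's own proof is a two-line sketch that defines the ball $B_M=\{V:\|V\|_{p,\theta}<M\}$ and the map $I(V)=\mathcal{G}V_0+\mathcal{A}V$ and invokes Corollary \ref{contract} to get a contraction for $M$ small, exactly as you do. Your write-up is in fact more careful than the paper's, since you explicitly flag the need to verify that the constraints on $p$ and $\theta$ from the three parts of Corollary \ref{contract} are simultaneously satisfiable and that uniqueness must be upgraded from the small ball to the full class $\|V\|_{p,\theta}<\infty$ — neither of which the paper addresses.
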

\begin{proof}
The proof is a usual fixed point argument as in say the proof of Theorem 15.2 of \cite{Souplet}. We assume that $\|V_0\|_{L^{q_c}(\R^d)}<M$ for some $M>0$.  Let 
\begin{equation*}
B_M:=\{V(t,\,x)\in L^p(\R^d); \|V\|_{p,\theta}<M \},
\end{equation*}
and 
\begin{equation*}
I(V)(t,\,x):=\mathcal{G}V_0(t,\,x)+\mathcal{A}V(t,\,x).
\end{equation*}
Then one can show that the map $I: B_M\rightarrow B_M$ has a unique fixed point whenever $M$ is small enough.
\end{proof}

\begin{proof}[Proof of Theorem \ref{thm-norm-existence}]
We choose a finitely supported initial function $V_0(x)$ which is bounded above by a small positive constant so that we can use the above result and the first part of Lemma \ref{young}. The above result says that we have a global solution satisfying 
\begin{equation*}
\|V(t,\,\cdot) \|_{L^p(\R^d)}\lesssim t^{-\theta}\quad \text{for all}\quad t>0, 
\end{equation*}
where $p>q_c$ is such that $\theta/\beta<1$. Let $p_1>p$ so that $\frac{1+\eta}{p}-\frac{1}{p_1}<\frac{\alpha}{d}$. Now from \eqref{B1},
\begin{equation*}\label{B1}
\| \int_{\R^d}G(t-s,\,\cdot-y)V(s,y)^{1+\eta}\,\d y\|_{L^{p_1}(\R^d)} \lesssim (t-s)^{-\frac{\beta d}{\alpha}(\frac{1+\eta}{p}-\frac{1}{p_1})}\|V(s,\,\cdot)\|_{L^p(\R^d)}^{1+\eta}.
\end{equation*}
This means that we have $ \|\mathcal{A}V(t,\,\cdot)\|_{L^{p_1}(\R^d)}\lesssim t^{-\tilde{\theta}_{p,\,p_1}}$ with $(1+\eta)\tilde{\theta}_{p,\,p_1}<1$.  For that particular $p_1$, we can apply the first part of Lemma \ref{young} to see that $\|\mathcal{G}V_0(t,\,\cdot) \|_{L^{p_1}(\R^d)}$ is also bounded for each $t>0$. For any $T>0$,  we can conclude that the solution $\|V(t,\,\cdot)\|_{L^{p_1}(\R^d)}$ is bounded on $(0,\,T].$ We continue the above procedure to conclude that there exists some constant $\gamma$ such  that $\|V(t,\,\cdot)\|_{L^{\infty}(\R^d)}\lesssim t^{\gamma}$ on $(0,\,T]$. Since $T$ is arbritary, this completes the proof.
\end{proof}

\section{Proof of Theorem \ref{thm-decay}}\label{proof4}
Throughout this section, we will assume that $d<\alpha.$  As seen above, the $G(t,\,x)$ does not satisfy the semigroup property. However, we can use \eqref{heat-comp} to obtain
\begin{align*}
\int_{\R^d}G(s,\,x-y)G(t,\,y-z)\,\d y&\lesssim \int_{\R^d}p(s^\beta,\,x-y)p(t^\beta,\,y-z)\,\d y\\
&\lesssim p((t+s)^\beta,\,x-z)\\
&\lesssim G(t+s,\,x-z).
\end{align*}
A straightforward consequence of the above is the following proposition where $\gamma$ will be a strictly positive constant; we will assume this throughout this section.
\begin{proposition}\label{initial}
If $V_0(x)\leq \delta G(\gamma,\,x)$, for some constant $\delta>0$, then
\begin{equation*}
\int_{\R^d}G(t,\,x-y)V_0(y)\,\d y\lesssim \delta G(t+\gamma,\,x),\quad\text{for all}\quad t>0\quad\text{and}\quad x\in \R^d.
\end{equation*}
\end{proposition}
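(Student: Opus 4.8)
The plan is to reduce Proposition~\ref{initial} to the convolution-type bound already derived at the start of this section, namely $\int_{\R^d} G(s,\,x-y)G(t,\,y-z)\,\d y\lesssim G(t+s,\,x-z)$. Since by hypothesis $V_0(y)\leq \delta G(\gamma,\,y)$ with $\gamma>0$ fixed, and $G(t,\,x-y)\geq 0$, we simply estimate
\begin{align*}
\int_{\R^d}G(t,\,x-y)V_0(y)\,\d y&\leq \delta \int_{\R^d}G(t,\,x-y)G(\gamma,\,y)\,\d y\\
&\lesssim \delta\, G(t+\gamma,\,x),
\end{align*}
where the last inequality is the displayed convolution bound applied with $s=t$, the kernel time $t$, second time $\gamma$, and $z=0$. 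This gives exactly the claimed estimate, uniformly in $t>0$ and $x\in\R^d$, since the implied constant in the convolution bound is independent of all parameters.

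The one point that needs a little care is that the convolution bound at the top of the section was written for $\int_{\R^d}G(s,\,x-y)G(t,\,y-z)\,\d y$ with $G(\gamma,\,\cdot)$ playing the role of the second kernel; here $\gamma$ is an arbitrary but fixed positive constant, so the chain of inequalities $G(\gamma,\,y-z)\lesssim p(\gamma^\beta,\,y-z)$, then the Gaussian-type semigroup estimate $\int p(t^\beta,\,x-y)p(\gamma^\beta,\,y-z)\,\d y\lesssim p((t^\beta+\gamma^\beta),\,x-z)$, and finally $p((t^\beta+\gamma^\beta),\,x-z)\lesssim p((t+\gamma)^\beta,\,x-z)\lesssim G(t+\gamma,\,x-z)$ all go through verbatim; this last step uses $d<\alpha$ so that \eqref{heat-comp} applies. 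So there is in fact no genuine obstacle here: the proposition is an immediate corollary of the estimate established in the two displays preceding its statement, together with the pointwise domination of $V_0$ by $\delta G(\gamma,\,\cdot)$.

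If one wants to be fully explicit about why $t^\beta+\gamma^\beta$ can be replaced by $(t+\gamma)^\beta$ inside the stable kernel, one notes that for $\beta\in(0,\,1)$ one has $(t+\gamma)^\beta\leq t^\beta+\gamma^\beta\leq 2(t+\gamma)^\beta$, and the two-sided bound \eqref{heatalpha} shows that $p(\tau,\,x)$ and $p(2\tau,\,x)$ are comparable up to constants depending only on $d$ and $\alpha$; hence $p(t^\beta+\gamma^\beta,\,x-z)\asymp p((t+\gamma)^\beta,\,x-z)\asymp G(t+\gamma,\,x-z)$ by \eqref{heat-comp}. Thus the only ingredients are the section-wide assumption $d<\alpha$, the heat-kernel comparison \eqref{heat-comp}, the Gaussian-type convolution estimate for $p$, and the nonnegativity of $G$; no new computation is required beyond what the excerpt already contains, so the proof is a one-line application of the preceding display.
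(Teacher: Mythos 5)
Your proof is correct and is essentially identical to the paper's: both bound $V_0$ pointwise by $\delta G(\gamma,\cdot)$ and then invoke the quasi-semigroup estimate $\int_{\R^d}G(t,\,x-y)G(\gamma,\,y)\,\d y\lesssim G(t+\gamma,\,x)$ established via \eqref{heat-comp} in the displays immediately preceding the proposition. Your additional justification of why $t^\beta+\gamma^\beta$ may be replaced by $(t+\gamma)^\beta$ inside the stable kernel is a correct filling-in of a step the paper leaves implicit.
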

\begin{proof}
Using the above we obtain
\begin{align*}
\int_{\R^d}G(t,\,x-y)V_0(y)\,\d y&\leq \delta\int_{\R^d}G(t,\,x-y)G(\gamma,\,y)\,\d y\\
&\lesssim  \delta G(t+\gamma,\,x).
\end{align*}
\end{proof}

\begin{proposition}\label{G2}
Suppose $\eta>\frac{\alpha}{\beta d}$, then for all $t>0$ and $x\in \R^d$,
\begin{equation*}
\int_{\R^d}\int_0^tG(t-s,\,x-y)G(s+\gamma,\,y)^{\eta+1}\,\d s\,\d y\lesssim G(t+\gamma,\,x).
\end{equation*}
\end{proposition}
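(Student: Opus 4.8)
The plan is to reduce the double integral to an expression of the form $G(t+\gamma,x)$ times a finite constant, using the heat-kernel comparison \eqref{heat-comp} (valid since $d<\alpha$) to replace $G$ by the stable kernel $p$, then exploiting the scaling and semigroup properties of $p$.  First I would apply \eqref{heat-comp} twice: on $G(s+\gamma,y)^{\eta+1}$ write $G(s+\gamma,y)^{\eta+1}\lesssim p((s+\gamma)^\beta,y)^{\eta+1}$, and on $G(t-s,x-y)$ write $G(t-s,x-y)\lesssim p((t-s)^\beta,x-y)$.  For the power of the stable kernel, use the Gaussian-type bound \eqref{heatalpha}: since $p((s+\gamma)^\beta,y)\lesssim (s+\gamma)^{-\beta d/\alpha}$, we get $p((s+\gamma)^\beta,y)^{\eta+1}\lesssim (s+\gamma)^{-\beta d\eta/\alpha}\,p((s+\gamma)^\beta,y)$, i.e. we trade one of the $\eta+1$ copies for a decaying prefactor and keep a single copy of the kernel so that mass is preserved under the $y$-integration.

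Next I would carry out the $y$-integration.  Having $\int_{\R^d}p((t-s)^\beta,x-y)\,p((s+\gamma)^\beta,y)\,\d y = p((t-s)^\beta+(s+\gamma)^\beta,x)$ by the genuine semigroup property of the stable kernel $p$.  Since $(t-s)^\beta+(s+\gamma)^\beta \geq c\,(t+\gamma)^\beta$ for $0\le s\le t$ (subadditivity/convexity of $r\mapsto r^\beta$ gives comparability, up to constants, of $(t-s)^\beta+(s+\gamma)^\beta$ with $(t-s+s+\gamma)^\beta=(t+\gamma)^\beta$), and since $p(\tau,x)$ is comparable to $p(\tau',x)$ when $\tau\asymp\tau'$ for the regime that matters (again from \eqref{heatalpha}: $p$ is decreasing in $\tau$ for $|x|$ large and increasing near the origin, but the two-sided bound in \eqref{heatalpha} makes $p(c\tau,x)\asymp p(\tau,x)$), I would bound $p((t-s)^\beta+(s+\gamma)^\beta,x)\lesssim p((t+\gamma)^\beta,x)\lesssim G(t+\gamma,x)$ using \eqref{heat-comp} once more.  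Pulling the factor $G(t+\gamma,x)$ out of the $s$-integral then leaves
\begin{equation*}
\int_{\R^d}\int_0^t G(t-s,x-y)G(s+\gamma,y)^{\eta+1}\,\d s\,\d y \lesssim G(t+\gamma,x)\int_0^t (s+\gamma)^{-\beta d\eta/\alpha}\,\d s.
\end{equation*}

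It remains to check the time integral is uniformly bounded.  The exponent is $\beta d\eta/\alpha > 1$ precisely because $\eta>\alpha/\beta d$, so $\int_0^\infty (s+\gamma)^{-\beta d\eta/\alpha}\,\d s = \frac{\gamma^{1-\beta d\eta/\alpha}}{\beta d\eta/\alpha - 1}<\infty$; this is exactly where the hypothesis $\eta>\frac{\alpha}{\beta d}$ is used, and it is the whole point of the supercritical regime.  Thus the double integral is $\lesssim G(t+\gamma,x)$, with the implied constant depending on $\gamma$, $\beta$, $d$, $\alpha$, $\eta$ but not on $(t,x)$.

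I expect the main technical nuisance — not a deep obstacle — to be justifying the comparability step $p((t-s)^\beta+(s+\gamma)^\beta,x)\lesssim p((t+\gamma)^\beta,x)$ uniformly in $s\in[0,t]$ and $x\in\R^d$.  One must confirm, from the explicit two-sided estimate \eqref{heatalpha}, that enlarging the time parameter by a bounded multiplicative factor changes $p(\cdot,x)$ only by a constant factor: in the regime $\tau^{-d/\alpha}\le \tau|x|^{-d-\alpha}$ this is clear, and in the complementary regime one uses $\tau\asymp\tau'$ directly in $\tau|x|^{-d-\alpha}$; near the crossover the min of the two is comparable either way.  A clean alternative that avoids this altogether is to split the $s$-integral at $t/2$: for $s\le t/2$ use $(t-s)^\beta\ge (t/2)^\beta\gtrsim$ a constant multiple of $t^\beta$ so that, after the $y$-convolution, the time parameter already dominates $(t/2)^\beta$ and one estimates $p$ at that time against $G(t+\gamma,x)$ via monotonicity in $|x|$; for $s\ge t/2$ use $(s+\gamma)^\beta\gtrsim (t/2+\gamma)^\beta$ and the decay factor $(s+\gamma)^{-\beta d\eta/\alpha}\lesssim (t/2+\gamma)^{-\beta d\eta/\alpha}$ to make the $s$-integral converge.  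Either route lands the same bound, and the convergence of the time integral is the essential content.
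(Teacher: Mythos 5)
Your proposal is correct and follows essentially the same route as the paper: the paper's proof extracts $\sup_{y}G(s+\gamma,y)^{\eta}\lesssim (s+\gamma)^{-\eta\beta d/\alpha}$ and then invokes the approximate semigroup inequality $\int_{\R^d}G(t-s,x-y)G(s+\gamma,y)\,\d y\lesssim G(t+\gamma,x)$, which is established at the start of Section \ref{proof4} by exactly the chain you write out (\eqref{heat-comp}, the true semigroup property of $p$, comparability of $(t-s)^\beta+(s+\gamma)^\beta$ with $(t+\gamma)^\beta$, and \eqref{heatalpha}), before concluding with the same convergent time integral. The only difference is presentational: the paper packages the convolution bound as a standing preliminary while you re-derive it inline and flag the time-comparability step explicitly.
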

\begin{proof}
We have
\begin{align*}
\int_0^t\int_{\R^d}G(t-s,\,x-y)&G(s+\gamma,\,y)^{\eta+1}\,\d s\,\d y\\
&\lesssim \int_0^t\sup_{y\in \R^d}G(s+\gamma,\,y)^\eta\int_{\R^d}G(t-s,\,x-y)G(s+\gamma,\,y)\,\d s\,\d y\\
&\lesssim G(t+\gamma,\,x)\int_0^t \frac{1}{(s+\gamma)^{\eta \beta d/\alpha}}\,\d s.
\end{align*}
Since $\gamma>0$, some calculus finishes the proof.
\end{proof}

\begin{proposition}\label{A1}
Suppose $\eta>\frac{\alpha}{\beta d}$, then
\begin{equation*}
\sup_{t>0,\,x\in\R^d}\left|\frac{(\mathcal{A}V)(t,\,x)}{G(t+\gamma,\,x)}\right|\lesssim \sup_{t>0,\,x\in\R^d}\left|\frac{V(t,\,x)}{G(t+\gamma,\,x)}\right|^{1+\eta}.
\end{equation*}
\end{proposition}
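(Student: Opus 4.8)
The plan is to bound the ratio $(\mathcal{A}V)(t,x)/G(t+\gamma,x)$ by dominating $V$ pointwise in terms of the heat kernel and then invoking the convolution estimate already proved in Proposition~\ref{G2}. Write
\[
N:=\sup_{t>0,\,x\in\R^d}\left|\frac{V(t,\,x)}{G(t+\gamma,\,x)}\right|,
\]
and assume without loss of generality that $N<\infty$ (otherwise there is nothing to prove). Then $V(s,y)\leq N\,G(s+\gamma,\,y)$ for all $s>0$ and $y\in\R^d$, and since $\eta>0$ we get $V(s,y)^{1+\eta}\leq N^{1+\eta}G(s+\gamma,\,y)^{1+\eta}$.

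The next step is to substitute this bound into the definition of $\mathcal{A}V$:
\[
(\mathcal{A}V)(t,\,x)=\int_0^t\int_{\R^d}G(t-s,\,x-y)V(s,y)^{1+\eta}\,\d y\,\d s\leq N^{1+\eta}\int_0^t\int_{\R^d}G(t-s,\,x-y)G(s+\gamma,\,y)^{1+\eta}\,\d y\,\d s.
\]
By Proposition~\ref{G2}, which applies precisely because $\eta>\alpha/\beta d$ and $\gamma>0$, the double integral on the right is $\lesssim G(t+\gamma,\,x)$ with an implied constant independent of $(t,x)$. Dividing through by $G(t+\gamma,\,x)$ and taking the supremum over $t>0$ and $x\in\R^d$ yields
\[
\sup_{t>0,\,x\in\R^d}\left|\frac{(\mathcal{A}V)(t,\,x)}{G(t+\gamma,\,x)}\right|\lesssim N^{1+\eta}=\sup_{t>0,\,x\in\R^d}\left|\frac{V(t,\,x)}{G(t+\gamma,\,x)}\right|^{1+\eta},
\]
which is the claim.

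There is no serious obstacle here: the statement is essentially a repackaging of Proposition~\ref{G2} together with the elementary observation that $G(t+\gamma,x)>0$ everywhere (so division is legitimate) and that a pointwise bound on $V$ survives being raised to the power $1+\eta$ and integrated against a positive kernel. The only point requiring a word of care is the case $N=\infty$, where the inequality is vacuous, and the implicit claim that all integrals converge, which is guaranteed by the finiteness asserted in Proposition~\ref{G2}. If one wants $(\mathcal{A}V)(t,x)$ itself to be finite a priori, one simply notes that the right-hand side of the displayed chain is finite whenever $N<\infty$.
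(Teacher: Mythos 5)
Your proof is correct and follows essentially the same route as the paper: both arguments bound $V(s,y)^{1+\eta}$ by the supremum of the ratio times $G(s+\gamma,y)^{1+\eta}$ and then invoke Proposition~\ref{G2} to control the resulting double integral by $G(t+\gamma,x)$. Your extra remarks on the case $N=\infty$ and on positivity of $G$ are harmless clarifications rather than a different method.
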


\begin{proof}
We have
\begin{align*}
\int_0^t\int_{\R^d}G(t-s,\,x-y)&V(s,y)^{1+\eta}\,\d y\,\d s\\
&\leq \int_0^t\int_{\R^d}G(t-s,\,x-y)G(s+\gamma,\,y)^{1+\eta}\left|\frac{V(s,y)}{G(s+\gamma,\,y)}\right|^{1+\eta}\,\d y\,\d s\\
&\leq \sup_{t>0,\,y\in \R^d}\left|\frac{V(t,y)}{G(t+\gamma,\,y)}\right|^{1+\eta}\int_0^t\int_{\R^d}G(t-s,\,x-y)G(s+\gamma,\,y)^{1+\eta}\,\d y\,\d s.
\end{align*}
We now use Proposition \ref{G2} to complete the proof.
\end{proof}
We need one final result before the proof of Theorem \ref{thm-decay}.
\begin{proposition}\label{contract1}
Suppose that $\eta>\frac{\alpha}{\beta d}$ and
\begin{equation*}
\sup_{t>0,\,x\in \R^d}\left|\frac{V(t,\,x)}{G(t+\gamma,\,x)}\right|\leq M\quad \text{and}\quad \sup_{t>0,\,x\in \R^d}\left|\frac{W(t,\,x)}{G(t+\gamma,\,x)}\right|\leq M,
\end{equation*}
for some $M>0$, then we have
\begin{equation*}
\sup_{t>0,\,x\in \R^d}\left|\frac{(\mathcal{A}V)(t,\,x)-(\mathcal{A}W)(t,\,x)}{G(t+\gamma,\,x)}\right|\lesssim M^\eta \sup_{t>0,\,x\in \R^d}\left|\frac{V(t,\,x)-W(t,\,x)}{G(t+\gamma,\,x)}\right|.
\end{equation*}
\begin{proof}
We start off by writing
\begin{align*}
(\mathcal{A}V)(t,\,x)&-(\mathcal{A}W)(t,\,x)\\
&=\int_0^t\int_{\R^d}G(t-s,\,x-y)[V(s,y)^{1+\eta}-W(s,y)^{1+\eta}]\,\d y\,\d s\\
&\lesssim \int_0^t\int_{\R^d}G(t-s,\,x-y)[|V(s,y)-W(s,y)|][V(s,y)^\eta+W(s,y)^\eta]\,\d y\,\d s\\
&\lesssim M^{\eta}\int_0^t\int_{\R^d}G(t-s,\,x-y)G(s+\gamma,\,y)^{1+\eta}\frac{|V(s,y)-W(s,y)|}{G(t+\gamma,\,y)}\,\d y\,\d s\\
&\lesssim M^\eta \sup_{t>0, x\in\R^d}\frac{|V(t,x)-W(t,x)|}{G(t+\gamma,\,x)}\int_0^t\int_{\R^d}G(t-s,\,x-y)G(t+\gamma,\,y)^{1+\eta}\,\d y\,\d s.
\end{align*}
An application of Proposition \ref{G2} yields the desired result.
\end{proof}
\end{proposition}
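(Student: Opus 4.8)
The plan is to reduce the claimed contraction bound for $\mathcal{A}$ to the single-kernel estimate of Proposition~\ref{G2}, after peeling off the weighted supremum norms by a direct pointwise estimate. Write $N_\gamma(U):=\sup_{t>0,\,x\in\R^d}|U(t,x)|/G(t+\gamma,x)$ for the quantity appearing throughout the statement, so that the hypotheses read $N_\gamma(V)\leq M$ and $N_\gamma(W)\leq M$ and the conclusion is $N_\gamma(\mathcal{A}V-\mathcal{A}W)\lesssim M^\eta N_\gamma(V-W)$. Since $V_0\geq 0$ and $G$ is a nonnegative kernel, $V$ and $W$ are nonnegative wherever they are defined, so the powers $V^{1+\eta}$, $W^{1+\eta}$ in the definition of $\mathcal{A}$ are meaningful and the elementary inequality
\[
\bigl|a^{1+\eta}-b^{1+\eta}\bigr|\leq (1+\eta)\,|a-b|\,\bigl(a^\eta+b^\eta\bigr),\qquad a,b\geq 0,
\]
is available pointwise (it follows from the mean value theorem applied to $r\mapsto r^{1+\eta}$ together with $\max(a,b)^\eta\leq a^\eta+b^\eta$).

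First I would subtract the two integral expressions. Because $\mathcal{A}V$ and $\mathcal{A}W$ carry the same kernel $G(t-s,x-y)$, the difference collapses to a single integral,
\[
(\mathcal{A}V)(t,x)-(\mathcal{A}W)(t,x)=\int_0^t\int_{\R^d}G(t-s,\,x-y)\bigl[V(s,y)^{1+\eta}-W(s,y)^{1+\eta}\bigr]\,\d y\,\d s .
\]
Next I apply the pointwise inequality to the bracket and use the two hypotheses in the form $V(s,y)\leq M\,G(s+\gamma,y)$ and $W(s,y)\leq M\,G(s+\gamma,y)$ to get $V(s,y)^\eta+W(s,y)^\eta\lesssim M^\eta G(s+\gamma,y)^\eta$, while $|V(s,y)-W(s,y)|\leq N_\gamma(V-W)\,G(s+\gamma,y)$. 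Multiplying these yields the clean pointwise bound
\[
\bigl|V(s,y)^{1+\eta}-W(s,y)^{1+\eta}\bigr|\lesssim M^\eta\,N_\gamma(V-W)\,G(s+\gamma,y)^{1+\eta},
\]
in which $M$ and $N_\gamma(V-W)$ are constants that can be pulled out of the $(s,y)$-integral.

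The last step is to insert this bound and recognise what remains:
\[
\bigl|(\mathcal{A}V)(t,x)-(\mathcal{A}W)(t,x)\bigr|\lesssim M^\eta\,N_\gamma(V-W)\int_0^t\int_{\R^d}G(t-s,\,x-y)\,G(s+\gamma,\,y)^{1+\eta}\,\d y\,\d s\lesssim M^\eta\,N_\gamma(V-W)\,G(t+\gamma,x),
\]
the last inequality being exactly Proposition~\ref{G2}, which requires $\eta>\alpha/\beta d$. Dividing by $G(t+\gamma,x)$ and taking the supremum over $(t,x)\in(0,\infty)\times\R^d$ gives the assertion.

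I do not expect a serious obstacle here: all the genuinely analytic work—compensating for the missing semigroup property of $G$ via \eqref{heat-comp}, and checking the convergence of the time integral $\int_0^t(s+\gamma)^{-\eta\beta d/\alpha}\,\d s$, which needs both $\gamma>0$ and $\eta>\alpha/\beta d$—has already been packaged into Proposition~\ref{G2}. The only points calling for mild care are the justification of the algebraic inequality for the $(1+\eta)$-power on $[0,\infty)$ and the bookkeeping that keeps every factor $G(s+\gamma,y)$ distinct from $G(t+\gamma,y)$ until the moment Proposition~\ref{G2} is invoked, so that the convolution integral is presented in exactly the form that proposition bounds.
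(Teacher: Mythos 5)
Your proof is correct and follows essentially the same route as the paper: collapse the difference to a single integral, apply the elementary inequality $|a^{1+\eta}-b^{1+\eta}|\lesssim |a-b|(a^\eta+b^\eta)$, use the weighted sup-norm hypotheses to produce the factor $M^\eta\,G(s+\gamma,y)^{1+\eta}$, and finish with Proposition \ref{G2}. If anything, your bookkeeping is slightly cleaner than the paper's displayed computation, which carries a typographical slip ($G(t+\gamma,y)$ in place of $G(s+\gamma,y)$ in the intermediate lines) that your version avoids.
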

We set
\begin{equation}\label{norm}
\|V\|:=\sup_{t>0, x\in \R^d}\left|\frac{V(t,\,x)}{G(t+\gamma,\,x)}\right|.
\end{equation}
The proof of Theorem \ref{thm-decay} involves a Picard iteration which we define as follows. For $n\geq 0$,
\begin{equation}\label{iter}
V_{n+1}(t,\,x):=\int_{\R^d}G(t,\,x-y)V_0(y)\,\d y+ (\mathcal{A}V_n)(t,\,x).
\end{equation}
\begin{proof}[Proof of Theorem \ref{thm-decay}]
We have all the ingredients to follow the proof of \cite{Fujita}.  We leave it to furnish a proof.
\end{proof}
\section{Proof of Theorem \ref{thm-Dirichlet-additional-drift}}\label{proof5}
The proof of this theorem relies on the following spectral decomposition of the Dirichlet heat kernel,
\begin{equation}\label{repre}
G_D(t,\,x,\,y)=\sum_{n=1}^\infty E_\beta(-\nu_nt^\beta)\phi_n(x)\phi_n(y).
\end{equation}
$\nu_n$ are the eigenvalues  of the the fractional Laplacian on the domain $B(0,\,R)$ and the corresponding eigenfunctions $\{\phi_n\}_{n\geq 1}$ form an orthonormal basis of $L^2(B(0,\,R))$. Here $E_\beta(t)=\sum_{k=0}^\infty t^{\beta k}/\Gamma(1+\beta k)$ is the Mittag-Leffler function.  See \cite{CME} and \cite{mnv-09} for more information about this.  If $\beta$ were one, then the above representation would have been in terms of the exponential function instead of the Mittag-Leffler function. The key observation is that we have the following polynomial decay:
\begin{equation}\label{decay}
\frac{1}{1+\Gamma(1-\beta)t}\leq E_\beta(-t)\leq \frac{1}{1+\Gamma(1+\beta)^{-1}t}\quad \text{for all}\quad t>0.
\end{equation}
We will only need the lower bound for the proof. The proof follows the same idea as that of Kaplan \cite{Kaplan}.
\begin{proof}[Proof of Theorem \ref{thm-Dirichlet-additional-drift}.]
Set
\begin{equation*}
F(t):=\int_{B(0,\,R)}V(t,\,x)\phi_1(x)\,\d x.
\end{equation*}
We now use the integral formulation of the equation given by \eqref{mild2} together with the representation \eqref{repre} to write
\begin{align*}
F(t)&=E_\beta(-\mu_1t^\beta)\int_{B(0,\,R)}V(t,\,y)\phi_1(y)\,\d y+\int_0^tE_\beta(-\mu_1(t-s)^\beta)\int_{B(0,\,R)}\phi_1(y)V(s,\,y)^{1+\eta}\,\d y\,\d s\\
&\gtrsim E_\beta(-\mu_1t^\beta)K_{V_0, \phi_1}+\int_0^tE_\beta(-\mu_1(t-s)^\beta)F(s)^{1+\eta}\d s\\
&\gtrsim \frac{K_{V_0, \phi_1}}{t^\beta}+\int_0^t\frac{F(s)^{1+\eta}}{t^\beta}\d s,
\end{align*}
where we have also taken $t$ to be large enough. We now let $G(t):=t^\beta F(t)$ and consider the case $\beta(1+\eta)<1$. Then the above inequality reduces to
\begin{align*}
G(t)\gtrsim K_{V_0, \phi_1}+\int_0^t\frac{G(s)^{1+\eta}}{s^{\beta(1+\eta)}}\,\d s.
\end{align*}
Since $G(t)$ is a supersolution to the following non-linear ordinary differential equation.

\begin{equation*}
\frac{\tilde{G}'(s)}{\tilde{G}(s)^{1+\eta}}=\frac{1}{s^{\beta(1+\eta)}}\quad\text{with}\quad \tilde{G}(0)=K_{V_0, \phi_1}.
\end{equation*}
Therefore there exists a $t_0$ such that $G(t)=\infty$ for all $t\geq t_0$ no matter what the initial condition $K_{V_0, \phi_1}$ is. When $\beta(1+\eta)\geq1$ and  $K_{V_0, \phi_1}>0$, we now obtain 
\begin{align*}
G(t)\gtrsim K_{V_0, \phi_1}+\int_1^t\frac{G(s)^{1+\eta}}{s^{\beta(1+\eta)}}\,\d s
\end{align*}
which can now be compared with
\begin{equation*}
\frac{\tilde{G}'(s)}{\tilde{G}(s)^{1+\eta}}=\frac{1}{s^{\beta(1+\eta)}}\quad\text{with}\quad \tilde{G}(1)=K_{V_0, \phi_1}.
\end{equation*}
Therefore there exists a $t_1$ such that $G(t)=\infty$ for all $t\geq t_1$ provided that the initial condition $K_{V_0, \phi_1}$ is large enough. This finishes the proof since $\phi_1(x)$ is strictly positive.
\end{proof}
\bibliography{Foon-Nane}

\end{document}